\pgfplotsset{compat=1.10}
\pgfplotsset{
    tick align=outside,
    x grid style={white},
    xmajorgrids,
    y grid style={white},
    ymajorgrids,
    axis line style={white},
    axis background/.style={fill=white!92!black},
    legend style={draw=white, fill=white},
    legend cell align={left}
}
\newtheorem{thm}{Theorem}
\newtheorem{proposition}[thm]{Proposition}
\newtheorem{lemma}[thm]{Lemma}
\theoremstyle{remark}
\newcommand{\ip}[2]{\langle #1,#2\rangle} % Old Inner product
\newcommand{\ipr}[2]{\big\langle #1,#2\big\rangle_\rho} % Inner product with weight rho
\newcommand{\pa}{\partial}
\newcommand{\eps}{\epsilon}
\newcommand{\R}{\mathbb{R}}
\newcommand{\C}{\mathbb{C}}
\newcommand{\bfu}{\mathbf{u}}
\newcommand{\Ai}{{\rm Ai}}
\def\Reel{{\cal R}e \,}
\def\Re{{\cal R}e \,}
\def\Im{{\cal I}m \,}
\def\sspace{\smallskip \noindent}
\def\mspace{\medskip \noindent}
\newcommand{\OO}{\mathcal{O}}
\newcommand{\dd}{\mathrm{d}} % constant d for differential
\newcommand{\tnorm}[1]{{\left\vert\kern-0.25ex\left\vert\kern-0.25ex\left\vert #1
    \right\vert\kern-0.25ex\right\vert\kern-0.25ex\right\vert}}
\newcounter{savecntrP7}
\newcounter{restorecntrP7}% Restore footnote counter
\title{On the ill-posedness of the triple deck model}
\author{Helge Dietert%
  \setcounter{savecntrP7}{\value{footnote}}
  \thanks{Universit\'e de Paris and Sorbonne Universit\'e, CNRS,
    Institut de Math\'ematiques de Jussieu-Paris Rive Gauche (IMJ-PRG),
    F-75013, Paris, France}\;
  \thanks{Currently on leave and working at
    Institut f\"ur Mathematik, Universit\"at Leipzig, D-04103 Leipzig, Germany}
  \and David G\'erard-Varet
  \setcounter{restorecntrP7}{\value{footnote}}%
  \setcounter{footnote}{\value{savecntrP7}}\footnotemark% Print footnotemark
  \setcounter{footnote}{\value{restorecntrP7}}\; \thanks{Institut Universitaire de France, F-75205 Paris, France}
}
\begin{document}
\maketitle

\begin{abstract}
  We analyze the stability properties of the so-called triple deck
  model, a classical refinement of the Prandtl equation to describe
  boundary layer separation.  Combining the methodology introduced in
  \cite{MR3835243}, based on complex analysis tools, and stability
  estimates inspired from \cite{MR3925144}, we exhibit unstable
  linearizations of the triple deck equation. The growth rates of the
  corresponding unstable eigenmodes scale linearly with the tangential
  frequency. This shows that the recent result of Iyer and Vicol
  \cite{IyVi} of local well-posedness for analytic data is essentially
  optimal.
 \end{abstract}

\section{Introduction}
Our concern in this paper is the triple deck model, introduced in the
1960's to describe the so-called boundary layer separation. The
general concern behind this model is to understand the behaviour of
Navier-Stokes solutions with velocity $\bfu_\nu = (u_\nu, v_\nu)$ and
pressure $p_\nu$ near a rigid boundary, when the inverse Reynolds
number $\nu$ goes to zero. Due to the no-slip condition at the
boundary, it is well-known that this is a singular asymptotic problem:
the Euler solution $\bfu_0 = (u_0, v_0)$ does not describe the
dynamics near the wall. In a celebrated paper \cite{Prandtl04}, Ludwig
Prandtl tackled this problem through the use of matched asymptotic
expansions.  For planar flows in the half plane
$\Omega = \R \times \R_+$, this means that two regions should be
distinguished: one away from the wall, where
\begin{equation}
\bfu_\nu(T,X,Y)  \approx \bfu_0(t,X,Y), \quad p_\nu(T,X,Y)  \approx p_0(t,X,Y)
\end{equation}
while close to the wall,  in a boundary layer, one should have
\begin{equation} \label{P_exp}
\begin{aligned}
 u_\nu(T,X,Y)  & \approx u_P\left(T,X,\frac{Y}{\sqrt{\nu}}\right), \quad v_\nu(T,X,Y)  \approx \sqrt{\nu} \, v_P\left(T,X,\frac{Y}{\sqrt{\nu}}\right),  \\
 p_\nu(T,x,Y)   & \approx p_P\left(T,X,\frac{Y}{\sqrt{\nu}}\right)
 \end{aligned}
\end{equation}
for boundary layer profiles $(u_P,v_P,p_P)  = (u_P,v_P,p_P)(T,X,y)$.  Moreover, by injecting the Prandtl boundary layer expansion in the Navier-Stokes equation and keeping the leading order terms, we end up with the Prandl system
\begin{equation} \label{P} \tag{P}
 \begin{aligned}
 \pa_T u_P + u_P \pa_X u_P + v_P \pa_y u_P - \pa^2_y u_P + \pa_X p_P & = 0, \\
 \pa_y p_P & = 0, \\
 \pa_X u_P + \pa_y v_P & = 0, \\
 u_P\vert_{y=0} = v_P\vert_{y=0} & =  0.
 \end{aligned}
\end{equation}
This system is completed by the conditions at infinity
$$ \lim_{y \rightarrow \infty} u_P(T,X,y) = u_0(T,X,0), \quad  \lim_{y \rightarrow \infty} p_P(T,X,y) = p_0(T,X,0) $$
which ensure the matching between the boundary layer and the upper inviscid region of the flow.

\mspace
The Prandtl model has revealed very fruitful to understand steady
Navier-Stokes flows in regions where boundary layers remain attached
to the boundary. However, it is well-known that downstream of the
flow, under an adverse pressure gradient, streamlines detach from the
boundary and recirculation occur. Moreover, in the unsteady context,
even upstream, Tollmien-Schlichting instabilities may destabilize the
flow. All these hydrodynamic phenomena have consequences on the
mathematical analysis of system \eqref{P}, for which various negative
results have been obtained: ill-posedness results
\cite{MR2601044,LiuYang}, blow-up results
\cite{MR1476316,gargano2,MR3590519,CoMa}, instability of Prandtl
expansions at the level of the Navier-Stokes equations
\cite{Gre,MR3566199,MR4038143}.  A common difficulty behind these
works is the appearance of small tangential scales, that invalidate
expansions of type \eqref{P_exp}, which are assumed to depend
regularly on $x$. In order to capture the effect of these small
scales, while still trying to obtain reduced models, several
refinements of the Prandtl model were introduced in the 1960's and
1970's. The most famous ones are the triple deck model and the
Interactive Boundary Layer model (IBL). The latter one, analyzed
mathematically in the recent paper \cite{MR3835243}, consists in
keeping additional $O(\sqrt{\nu})$ terms, resulting in a coupling
between the inviscid equations for the upper region, and the
(modified) Prandtl equation.

Here we focus on the triple deck model. We first extend the derivation
given in Lagrée \cite{Lag} to the unsteady setting. The basic idea is
to study perturbations to the main Prandtl flow, in the vicinity of
$T = T^*, X = X^*$ (typically the time and abscissa of separation),
with small scale variations in $T,X$. Denoting $\eps$ the amplitude of
the perturbation, and $\eta$, $\delta$ the small time and tangential
scales, we write
\begin{equation*}
\begin{aligned}
  u_\nu(T,X,Y) & \approx u_P\left(T,X,\frac{Y}{\sqrt{\nu}}\right)
  + \eps  \tilde u\left(\frac{T-T^*}{\eta},\frac{X-X^*}{\delta},\frac{Y}{\sqrt{\nu}}\right) \\
  & \approx u_P\left(T^*,X^*,\frac{Y}{\sqrt{\nu}}\right)
  + \eps  \tilde u\left(\frac{T-T^*}{\eta},\frac{X-X^*}{\delta},\frac{Y}{\sqrt{\nu}}\right)  + \OO(\eta) + \OO(\delta) \\
 v_\nu(T,X,Y) & \approx \sqrt{\nu} \,
 v_P\left(T,X,\frac{Y}{\sqrt{\nu}}\right)
 + \sqrt{\nu} \, \frac{\eps}{\delta}
 \tilde v\left(\frac{T-T^*}{\eta},\frac{X-X^*}{\delta},\frac{Y}{\sqrt{\nu}}\right) \\
 & \approx \sqrt{\nu} \, \frac{\eps}{\delta}
 \tilde v\left(\frac{T-T^*}{\eta},\frac{X-X^*}{\delta},\frac{Y}{\sqrt{\nu}}\right)  +  \OO(\sqrt{\nu}) \\
 p_\nu(T,X,Y) & \approx p_P(T,X)
 + \eps^2 \tilde p\left(\frac{T-T^*}{\eta},\frac{X-X^*}{\delta},\frac{Y}{\sqrt{\nu}}\right)
 \end{aligned}
\end{equation*}
(we anticipate that the amplitude of the pressure is $\eps^2$, see
below).  Injecting the ansatz into the Navier-Stokes equations, we
derive the relations satisfied by
$(\tilde u, \tilde v, \tilde p) = (\tilde u, \tilde v, \tilde
p)(t,x,y)$. With notation $U(y) := u_P(T^*,X^*,y)$, anticipating that
$\eta \gg \delta$ and \(\nu \ll \epsilon \delta^2\), we get
\begin{equation*}
  \pa_x \tilde u + \pa_y \tilde v = 0, \quad U \pa_x \tilde u + U'
  \tilde v = 0, \quad \pa_y \tilde p = 0.
\end{equation*}
The second identity reads
$U^2 \pa_y \left( \frac{\tilde v}{U} \right) = 0$. Thanks to this
relation and to the divergence-free condition, we can introduce a
function $A = A(t,x)$ such that
\begin{equation*} \tilde u(t,x,y) = A(t,x) U'(y), \quad \tilde
  v(t,x,y) =  - \pa_x A(t,x) U(y).
\end{equation*}
In particular, we see that $\tilde u(t,x,0) = A(t,x) U'(0)$ is
non-zero. To restore the no-slip condition, one must add a
sublayer. This sublayer is referred to as the {\em lower deck}, while
the main one, corresponding originally to the Prandtl layer, is the
{\em main deck}. Eventually, the upper region outside the
$\OO(\sqrt{\nu})$ boundary layer is called the {\em upper deck}. Let
$h$ be the typical length scale of the lower deck, and $z = y/h$. The
velocity at the bottom of the main deck reads
$U(y) + \eps \tilde u(t,x,y) \approx h U'(0) z + \eps \tilde
u(t,x,0)$. For matching between the lower and main deck, it is
therefore natural to take $h = \eps$, and to look for an asymptotics
in the lower deck of the form:
\begin{equation*}
\begin{aligned}
u_\nu(T,X,Y)  & \approx \eps u\left(\frac{T-T^*}{\eta},\frac{X-X^*}{\delta},\frac{Y}{\sqrt{\nu} \eps}\right), \\
v_\nu(T,X,Y)  & \approx \sqrt{\nu} \frac{\eps^2}{\delta}  v\left(\frac{T-T^*}{\eta},\frac{X-X^*}{\delta},\frac{Y}{\sqrt{\nu} \eps}\right), \\
p_\nu(T,X,Y) & \approx \eps^2 p\left(\frac{T-T^*}{\eta},\frac{X-X^*}{\delta},\frac{Y}{\sqrt{\nu} \eps}\right)
\end{aligned}
\end{equation*}
with $ (u,v) = (u,v)(t,x,z)$. Moreover, in order to match the effects
of time variation, advection, and diffusion in the lower deck,
$\pa_t \sim u \pa_X \sim \nu \pa^2_Y$, one has to take
$\delta \sim \eps^3$, $\eta \sim \eps^2$. The amplitude $O(\eps^2)$ of
the pressure term allows to retain it as well. This results in
 \begin{equation*}
 \begin{aligned}
 \pa_t u + u \pa_x u + v \pa_z u - \pa^2_z u + \pa_x p & = 0, \\
 \pa_z p & = 0, \\
 \pa_x u + \pa_z v & = 0, \\
 u\vert_{z=0} = v\vert_{z=0} & =  0.
 \end{aligned}
\end{equation*}
These equations are the same as those in \eqref{P}. But, the boundary conditions at infinity differ from the classical ones. Assume $U(\infty) = 1$, $U'(0) =1$  for simplicity. On one hand, matching of the velocities of the lower and main desks  yields
\begin{equation*}
u(t,x,z) \sim U'(0) z + \tilde u(t,x,0) = z + A(t,x) , \quad z \rightarrow +\infty
\end{equation*}
On the other hand, as explained in \cite{Lag} and apparent in the
original Prandtl layer \eqref{P}, the $\OO(\eps^2)$ pressure should
not change across the lower and main decks, and coincide with the
trace of the pressure in the upper deck. In this upper deck, the
dynamics is driven by the so-called blowing velocity, that is the
normal component coming from the main deck:
$\sqrt{\nu} \frac{\eps}{\delta} \, \tilde v(t,x,\infty) \sim
\frac{\sqrt{\nu}}{\eps^2}$. Anticipating that the upper deck must have
the same amplitude, we find $\eps^2 \sim \frac{\sqrt{\nu}}{\eps^2}$,
that is $\eps = \nu^{1/8}$. Finally, in the upper deck, one looks for
an asymptotics isotropic in $X,Y$ of the form
\begin{align*}
 u_\nu & \approx 1 + \nu^{1/4}  \overline{u}\left(\frac{T-T^*}{ \nu^{1/4}},\frac{X-X^*}{\nu^{3/8}},\frac{Y}{\nu^{3/8}}\right),  \quad v_\nu \approx \nu^{1/4}  \overline{v}\left(\frac{T-T^*}{\nu^{1/4}},\frac{X-X^*}{\nu^{3/8}},\frac{Y}{\nu^{3/8}}\right), \\
  p_\nu & \approx  \nu^{1/4}   \overline{p}\left(\frac{T-T^*}{\eps^2},\frac{X-X^*}{\nu^{3/8}},\frac{Y}{\nu^{3/8}}\right),
\end{align*}
Plugging the asymptotic ansatz into the Naiver-Stokes equations yields the linearized Euler dynamics for $(\overline{u}, \overline{v}, \overline{p}) = (\overline{u}, \overline{v}, \overline{p})(t,x,\theta)$:
\begin{equation*}
  \pa_x \overline{u} + \pa_x \overline{p} = 0, \quad
  \pa_x \overline{v} + \pa_\theta \overline{p} = 0, \quad
  \pa_x \overline{u} + \pa_\theta \overline{v} = 0, \quad
  \overline{v}\vert_{\theta=0} = \tilde v(t,x,\infty) = -\pa_x A(t,x).
\end{equation*}
This system can be solved using Fourier transform in $x$ as
\begin{equation*}
  \mathcal{F} \overline{p}\vert_{\theta = 0}(\xi) = -\text{sign}(\xi) \mathcal{F} \pa_x A(t,\cdot)(\xi) = |\xi| \mathcal{F} A(t,\cdot)(\xi).
\end{equation*}
In physical variables this is
\begin{equation*}
  \overline{p}\vert_{\theta = 0} = |\pa_x| A(t,x) :=  \frac{1}{\pi} \int_{\R} \frac{\pa_x A(t,x)}{x-\xi} dx,
\end{equation*}
where the right-hand side is the Hilbert transform of $\pa_x A$ in variable $x$.

Thanks to this last condition, the triple deck model can be written
\begin{equation} \label{TD} \tag{TD}
\begin{aligned}
 \pa_t u + u \pa_x u + v \pa_z u - \pa^2_z u + \pa_x  |\pa_x| A & = 0, \\
 \pa_x u + \pa_z v & = 0, \\
 u\vert_{z=0} = v\vert_{z=0} & =  0. \\
 \lim_{z \rightarrow \infty} u - z  & = A.
 \end{aligned}
\end{equation}
The unknowns are $(u,v) = (u,v)(t,x,z)$ and $A = A(t,x)$. One must complete the system with an initial data $u\vert_{t=0} = u_0(x,z)$, consistent with the structure at infinity given by the last line of \eqref{TD}.
Note that if we let $z \rightarrow +\infty$ in the first equation, using $u = z + A + o(1)$, we obtain the redundant consistency equation:
\begin{equation} \label{consistency_eq}
 \pa_t A + A \pa_x A + \pa_x  |\pa_x| A = \pa_x \int_0^{+\infty} (u-A)\,
 \dd z.
\end{equation}
The different spatial scalings are shown in
\cref{fig:spatial-scales}. Wrapping the derivation up, the overall
idea is to consider a perturbation around a boundary layer
\((u_P,v_P)\) with small tangential scale \(\delta\) around \(X=X^*\)
and assuming that away from the lower deck (which is where \(u_P\) is
expected to loose monotonicity) the inviscid terms dominate.

\begin{figure}[htb]
  \centering
  \begin{tikzpicture}[xscale=5]
    % Background of triple deck
    \fill[black!10!white] (1,0) -- (3,0) -- (3,4) -- (1,4);
    % Coordinate system
    \draw[->] (0.5,0) -- (3.5,0) node[anchor=north] {\(X\)};
    \draw[->] (0.5,0) -- (0.5,4) node[anchor=east] {\(Y\)};
    \draw[dotted] (3.5,1) -- (0.5,1) node [anchor=east] {\(\epsilon\, \sqrt{\nu}\)};
    \draw[dotted] (3.5,3) -- (0.5,3) node [anchor=east] {\(\sqrt{\nu}\)};
    \draw (1,4) -- (1,0) node [anchor=north] {\(X^*-\delta\)};
    \draw (2,4) -- (2,0) node [anchor=north] {\(X^*\)};
    \draw (3,4) -- (3,0) node [anchor=north] {\(X^*+\delta\)};
    % Insert
    \draw (1,0.5) node [anchor=west] {lower deck};
    \draw (2,0.5) node [anchor=west] {Perturbation \((u,v)\)};
    \draw (1,2) node [anchor=west] {middle deck};
    \draw (2,2.5) node [anchor=west] {Inviscid approximation};
    \draw (2,1.85) node [anchor=west]
    {\(\tilde u(t,x,y) = A(t,x) U'(y)\)};
    \draw (2,1.4) node [anchor=west]
    {\(\tilde v(t,x,y)= - \pa_x A(t,x) U(y)\)};
    \draw (1,3.5) node [anchor=west] {upper deck};
    \draw (2,3.5) node [anchor=west] {Linearized Euler flow};
  \end{tikzpicture}
  \caption{Spatial scales of the triple deck model where
    \(\delta=\nu^{3/8}\) and \(\epsilon=\nu^{1/8}\).}
  \label{fig:spatial-scales}
\end{figure}
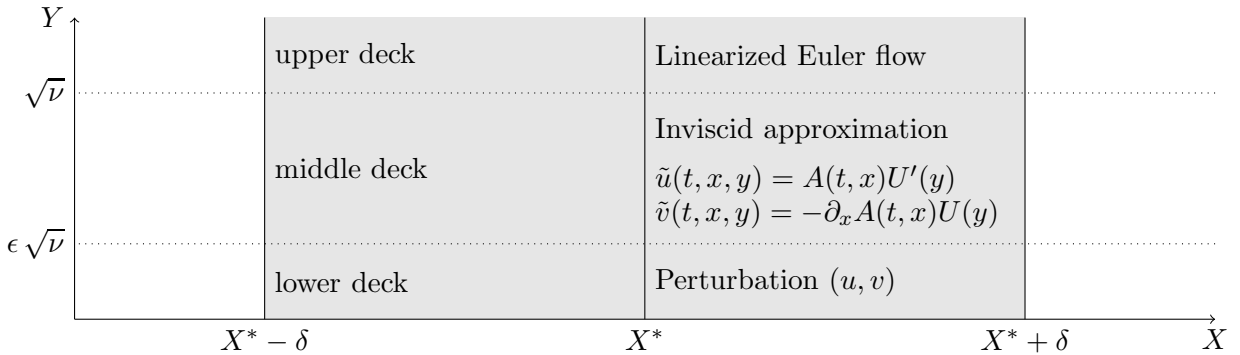

Although formulated in the 1960's, and extensively studied from a
numerical viewpoint since then, the triple deck system \eqref{TD} has
not been much investigated mathematically.  In the steady case, one
can mention the work \cite{MR2251455} of L. Planti\'e, focused on a
modification of the model: the displacement velocity $A= A(x)$ is
given while the pressure $p = p(x)$ is kept as an unknown.
Well-posedness is established under an assumption of non-decreasing
displacements, using Von Mises transform. In the unsteady case, the
only work we are aware of is the recent paper \cite{IyVi} by S. Iyer
and V. Vicol, which shows local in time well-posedness of \eqref{TD},
for data $u_0$ that are analytic in $x$, Sobolev in $y$, with further
gaussian decay in $y$. Let us stress that although analytic
well-posedness is well-known for the classical Prandtl equation, see
\cite{sammartino-calfisch-1998-boundary-layer-analytic,KuVi} extension
of such result to \eqref{TD} is uneasy. Indeed, the evolution equation
for $u$ in \eqref{TD} contains the annoying term
$\pa_x |\pa_x| A \approx \pa_x |\pa_x| u\vert_{z=\infty}$. This term
is not skew-symmetric in $L^2(\R \times \R_+)$, with potential severe
loss of two derivatives in $x$. To show a positive result in analytic
regularity, the authors have to combine two main ingredients. On one
hand, they control $A$ thanks to equation \eqref{consistency_eq}, in
which skew-symmetry of the Benjamin-Ono operator helps. On the other
hand, they control $u-A$ thanks to its rapid decay in $y$ and the use
of clever time-dependent cut-offs. We refer to \cite{IyVi} for all
necessary details.

\mspace Can we relax the assumption of analytic regularity for
well-posedness?  We remind that \eqref{P} is well-posed for any data
with Gevrey 2 regularity \cite{MR3925144}, and for Sobolev data that
are monotonic in $y$.  The triple deck model being supposedly a
refinement of the Prandtl one, it is natural to look for the same kind
of stability results. Encouragement can be found from the analysis of
the simplest linearization of \eqref{TD}, namely around $u(z) =
z$. The linearized system reads
\begin{equation} \label{eq:simple-linear-system}
  \begin{aligned}
    \pa_t u +  z \pa_x u + v - \pa^2_z u + \pa_x  |\pa_x| A & = 0, \\
    \pa_x u + \pa_z v & = 0, \\
    u\vert_{z=0} = v\vert_{z=0} & =  0. \\
    \lim_{z \rightarrow \infty} u   & = A.
  \end{aligned}
\end{equation}
Explicit calculations, sketched in Appendix \ref{appendixA}, can be
performed, and show that any family of eigenfunctions of the form
\begin{equation} \label{perturbation}
u_k(t,x,z) =  e^{\lambda_k t} e^{i kx} \hat{u}_k(z), \quad k \in \R
\end{equation}
satisfies $\Reel \lambda_k = O(1)$ as $k \rightarrow \pm\infty$, which is consistent with Sobolev well-posedness.

\mspace
Nevertheless, as we will show in this paper, there are monotonic shear flows $V_s(z)  = z + U_s(z)$ such that the linearization of \eqref{TD} around $u_s$ is ill-posed below analytic regularity. More precisely, we will prove that these linearized equations admit solutions of the form \eqref{perturbation}, with
$$\sigma_m := \liminf_{k \rightarrow +\infty} \Reel \lambda_k/k  > 0.$$
One could exhibit similarly solutions with
$\liminf_{k \rightarrow -\infty} \Reel \lambda_k/|k| > 0$.  This
prevents any general well-posedness statement for data $u_0$ that are
not analytic in $x$. In short, for any $T > 0$, one needs to impose a
bound of the form
$\|\hat{u}_0(k,\cdot)\| \le C e^{-(\sigma_m T) |k|}$, with some
appropriate norm $\| \cdot \|$ in variable $z$, to ensure a bound on the
solution over $(0,T)$. By the Paley-Wiener theorem, it is well-known
that such exponential decay in $|k|$ corresponds to analytic
regularity in $x$. In particular, one should not hope that the
analytic result of \cite{IyVi} can be improved in general. The next
section is dedicated to the statement of our main results.

\section{Results and strategy of proof}
We investigate in this paper linearizations of system \eqref{TD}, around shear flows of the form
$$u = z +  U_s(z), v =0.$$
Due to the diffusion term in \eqref{TD}, non-affine shear flows are
not solutions of the homogeneous triple deck equation. One way to
circumvent this issue is of course to start from the inhomogeneous
equation with source $-U''_s$. Another way to proceed is to consider
time-dependent shear flows, and argue that the time variation of the
flow is negligible at the time scale of the high frequency
instabilities that we shall discuss here. We refer to \cite{MR2601044}
for a rigorous reasoning in this second direction.

\mspace
We assume that
$$U_s(0) = 0, \quad  \lim_{z \rightarrow \infty} U_s = A_s  \in \R.$$
 We also assume for simplicity that $U_s$ is smooth, and that $U_s - A_s$ and their derivatives have fast decay at infinity. Less stringent assumptions could be extracted from the proof.
 The linearized system reads
\begin{equation} \label{LTD} \tag{LTD}
  \begin{aligned}
  \partial_t u + (U_s+z) \partial_x u - (1+U_s') \partial_x \int_0^z u + \partial_x |\partial_x| A - \pa^2_z u
  & =  0 \\
 u|_{z=0} & = 0 \\
  \lim_{z\to\infty} u &  = A.
  \end{aligned}
\end{equation}
We have expressed $v=-\int_0^y \pa_x u$ thanks to the divergence-free condition and the non-penetration condition $v\vert_{z=0} = 0$.
%The corresponding linearized consistency equation is now
%$$
%  \partial_t A + A_s \partial_x A + \partial_x |\partial_x| A = \partial_x \int_0^\infty (u-A).
%$$
We are interested in the spectral analysis of \eqref{LTD} in the high
frequency regime, i.e.\ looking for eigenmodes  of the form
\begin{equation} \label{eigenmodes}
  u_k(t,x,y) = e^{-i k \mu_k  t} e^{ikx} \hat{u}_k(y), \quad
  A_k(t,x) =  \frac{1}{k} e^{-i k \mu_k \  t} e^{ikx}, \quad k \gg 1.
\end{equation}
Note that by linearity, we are allowed to fix  $\hat{A}_k = \frac{1}{k}$. We wish to exhibit a class of monotonic shear flows $U_s$ such that \eqref{LTD} has  non-trivial solutions of the form \eqref{eigenmodes} for $k$ large,  satisfying
 $\liminf_{k \rightarrow +\infty} \Im \mu_k  > 0$.
 To do so, we will follow the path introduced in \cite{MR3835243}  to analyze the stability properties of the linearized Interactive Boundary Layer model (IBL)
 \begin{equation} \label{LIBL}
 \begin{aligned}
 \partial_t u + (U_s+z) \partial_x u - (1+U_s') \partial_x \int_0^z u  - \pa^2_z u
  & =  \pa_t u_e + u_e \pa_x u_e  \\
   u_e - \sqrt{\nu} |\pa_x| \int_0^{+\infty} (u - u_e) dz & = 0 \\
 u|_{z=0} & = 0 \\
  \lim_{z\to\infty} u &  = u_e.
  \end{aligned}
\end{equation}
This path goes through the following steps:
\begin{enumerate}
\item We plug the formula \eqref{eigenmodes} in system \eqref{LTD},
  and reformulate our search for instability as a one-dimensional
  eigenvalue problem in variable $z$, with unknown eigenvalue $\mu_k$.
\item We take the formal limit $k \rightarrow +\infty$ of the
  eigenvalue problem, and we derive a necessary and sufficient
  condition on $U_s$ for the existence of an unstable eigenvalue
  $\mu_\infty$ to this limit eigenvalue problem.  To obtain such
  condition, we use tools from complex analysis, vaguely inspired by
  the work of O. Penrose on Vlasov-Poisson equilibria
  \cite{penrose1960}. First, we show that eigenvalues $\mu_\infty$ are
  the zeroes of an holomorphic function $\Phi_\infty$.  Namely,
  $\Phi_\infty(\mu) = \phi_{\mu, \infty}(0)$ for an explicit function
  $\phi_{\mu, \infty} = \phi_{\mu,\infty}(z)$. Then, we show that the
  existence of a zero $\mu_\infty$ in the unstable half plane
  $\{ \Im \mu > 0 \}$ amounts to a condition on the number of
  crossings of the positive real axis by some explicit curve related
  to $\Phi_\infty$. Examples (both numerical and analytical) of shear
  flows satisfying this condition are given.
 \item Eventually, we show that an instability at $k = +\infty$ persists at large but finite $k$. To do so, we express again the rescaled eigenvalue $\mu_k$ as the zero of a function
 $\Phi_k$. Again, $\Phi_k(\mu) =  \phi_{\mu,k}(0)$ for some function $\phi_{\mu,k} = \phi_{\mu,k}(z)$, but this function is no longer explicit. Roughly, it satisfies the resolvent equation of a Prandtl like operator. A keypoint of our analysis is to establish a stability estimate for this resolvent equation. Thanks to this estimate, we are then able to show that  $\Phi_k$ is holomorphic in $\{ \Im \mu > \delta \}$ for any $\delta > 0$ and $k$ large enough. We are also able to  show that for $k$ large and $\mu$ in  a compact set,  the solution
 $\phi_{\mu,k}$ is close to  $\phi_{\mu,\infty}$. We deduce from this that $\Phi_\infty$ and $\Phi_k$ are close in a neighborhood of $\mu_\infty$ for $k$ large enough, and conclude by Rouché's theorem.
\end{enumerate}
As a result of the analysis sketched above, we state our main theorem:
\begin{thm}[\textbf{Ill-posedness below analytic regularity}] \label{main_thm}
\phantom{x}\\
Assume $V_s(z) := z + U_s(z)$ has positive derivative on $\R_+$. Let
$ g(u) := \frac{V''_s}{(V'_s)^3}\vert_{u = V_s(y)}$. Assume that $g$
is strictly monotone in the neighborhood of each of its positive
zeroes, and define
\begin{equation*}
  n_\pm := \text{card} \, \Big\{  a > 0, \: g(a) = 0,  \:
  -\frac{1}{V'_s(0)} + a \textrm{PV} \int_0^\infty \frac{g(u)}{a-u} du
  > 0 , \: \pm g \: \text{ strictly increasing near $a$}  \Big\}.
\end{equation*}
Then, $n_\pm$ is finite, and if $n_+ - n_- \neq 0$, there exist
solutions of \eqref{LTD} of type \eqref{eigenmodes} with
$\liminf_{k \rightarrow \infty }\Im \mu_k > 0$.

\sspace
Moreover, there indeed exist shear flows $U_s$ such that $n_+ - n_- \neq  0$.
\end{thm}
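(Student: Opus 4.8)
The plan is to carry out the three-step scheme of the list above, with the bulk of the work in Step~3.

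\emph{Step 1 --- reduction to a scalar problem.}
First I would insert \eqref{eigenmodes} into \eqref{LTD} (take $k>0$, the case $k<0$ being symmetric up to a sign in the forcing) and divide by $\ii k$; the profile $\phi:=\hat u_k$ then solves
\begin{equation}\label{plan:reduced}
  \tfrac{\ii}{k}\,\phi'' + \bigl(V_s(z)-\mu\bigr)\phi - V_s'(z)\!\int_0^z\!\phi\,\dd\zeta + 1 = 0,\qquad \phi(0)=0,\qquad \phi(z)\to\tfrac1k\ \ (z\to\infty),
\end{equation}
with $\mu:=\mu_k$ and the condition at infinity understood together with decay of the unbounded mode. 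I would then read \eqref{plan:reduced} as the resolvent equation $(\Lap_k-\mu)\phi=f$ for a Prandtl-type operator $\Lap_k$ --- multiplication by $V_s$, the small viscous term $-\tfrac{\ii}{k}\pa_z^2$, and the non-local term $-V_s'\!\int_0^z$ --- whose admissible solution $\phi_{\mu,k}$ is singled out by the behaviour at infinity, so that $\mu_k$ is a zero of the holomorphic ``Evans function'' $\Phi_k(\mu):=\phi_{\mu,k}(0)$ (the remaining no-slip condition at $z=0$).

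\emph{Step 2 --- the inviscid limit and a Penrose-type criterion.}
Setting $k=\infty$ removes the viscous term; with $w:=\int_0^z\phi$ the limit equation is the first-order ODE $(V_s(z)-\mu)w'-V_s'(z)w+1=0$, which in the variable $u=V_s(z)$ --- using $\psi(u):=1/V_s'\bigl(V_s^{-1}(u)\bigr)$ and the identity $\psi'=-g$ --- integrates explicitly. Imposing boundedness at $u=\infty$ together with $w(0)=0$ (which encodes the bottom boundary, equivalently the matching with a viscous sublayer near $z=0$ where $V_s(z)\simeq V_s'(0)z$) yields, after an integration by parts, the scalar limit condition $\Phi_\infty(\mu)=0$ with
\begin{equation*}
  \Phi_\infty(\mu) \;=\; -\frac{1}{V_s'(0)} \;+\; \mu\int_0^\infty\frac{g(u)}{\mu-u}\,\dd u ,
\end{equation*}
holomorphic on $\{\Im\mu>0\}$, tending to $-1$ as $|\mu|\to\infty$, and real on $(-\infty,0]$ with $\Phi_\infty(0)=-1/V_s'(0)<0$. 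By the Plemelj formula, for $a>0$ the boundary values are $\Phi_\infty(a+\ii0)=\bigl(-\tfrac{1}{V_s'(0)}+a\,\textrm{PV}\!\int_0^\infty\tfrac{g(u)}{a-u}\dd u\bigr)-\ii\pi a\,g(a)$. Since $g$ decays fast and is strictly monotone near each positive zero, the closed curve $a\mapsto\Phi_\infty(a+\ii0)$ meets the positive real half-line transversally exactly at the zeros $a$ of $g$ with $-\tfrac{1}{V_s'(0)}+a\,\textrm{PV}\!\int_0^\infty\tfrac{g(u)}{a-u}\dd u>0$, each crossing counted with a sign determined by whether $g$ is increasing or decreasing there; the argument principle then computes the number of zeros of $\Phi_\infty$ in $\{\Im\mu>0\}$ as the winding number of this curve about $0$, so that $n_+-n_-\neq0$ forces a zero $\mu_\infty$ with $\Im\mu_\infty>0$. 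Finiteness of $n_\pm$ follows because $\Phi_\infty(a\pm\ii0)\to-1$ as $|a|\to\infty$ and $\Phi_\infty$ is negative near $0$, so the relevant zeros of $g$ lie in a fixed compact interval on which there are finitely many by strict monotonicity.

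\emph{Step 3 --- persistence at large finite $k$.}
The crux, and the step I expect to be hardest, is a resolvent estimate for $\Lap_k-\mu$ that is uniform in $k\gg1$ on $\{\Im\mu\ge\delta\}$ for each $\delta>0$. Pairing \eqref{plan:reduced} with $\bar\phi$ controls $\tfrac1k\|\phi'\|^2+(\Im\mu)\|\phi\|^2$, but the non-skew-symmetric term $V_s'\!\int_0^z$ is not absorbed by that alone: I would close the estimate with a weighted energy argument in the spirit of \cite{MR3925144}, using the fast decay of $V_s'$, Hardy-type inequalities, and a tailored $z$-weight, with $1/k$ in the role of a semiclassical parameter. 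This should yield that $\phi_{\mu,k}$ exists, that $\Phi_k$ is holomorphic on $\{\Im\mu>\delta\}$ with bounds independent of $k$, and --- the viscous term being $O(1/k)$ --- that $\phi_{\mu,k}\to\phi_{\mu,\infty}$, hence $\Phi_k\to\Phi_\infty$, locally uniformly on $\{\Im\mu>0\}$. Since $\mu_\infty$ is an isolated zero of the nontrivial holomorphic function $\Phi_\infty$, Rouché's theorem then gives, for every large $k$, a zero $\mu_k$ of $\Phi_k$ with $\mu_k\to\mu_\infty$, whence $\liminf_{k\to+\infty}\Im\mu_k\ge\Im\mu_\infty>0$ and the desired solutions of \eqref{LTD}. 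The difficulty is genuinely concentrated here: the loss of two $x$-derivatives hidden in $\pa_x|\pa_x|A\simeq\pa_x|\pa_x|u|_{z=\infty}$ and the lack of skew-symmetry make naive energy estimates fail, so the Gevrey-$2$ mechanism of \cite{MR3925144} must be adapted to keep the constants uniform as $k\to\infty$; by contrast the complex analysis of Step~2 is conceptually central but computationally elementary once $\Phi_\infty$ is in hand.

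\emph{Step 4 --- existence of admissible shear flows.}
Finally, to produce $U_s$ with $n_+-n_-\neq0$ I would run Step~2 backwards. Take $g(u)=\eps_0\,h\bigl((u-a_0)/\eta\bigr)$ with $a_0,\eta,\eps_0>0$ and $h$ a fixed smooth odd function vanishing only at $0$ and strictly decreasing there, e.g.\ $h(\tau)=-\tau\ee^{-\tau^2}$; then $g$ has the single positive zero $a_0$, at which it is strictly decreasing, so the standing hypothesis on $g$ holds. Recovering $V_s$ from $\psi(u)=1+\int_u^\infty g$, $V_s'=1/(\psi\circ V_s)$, $V_s(0)=0$, one gets $V_s'>0$ on $\R_+$ as soon as $\eps_0\eta\|h\|_{L^1}<1$, together with $V_s'(0)$ close to $1$ and $V_s-z-A_s$ of super-exponential decay, so all hypotheses of the theorem are satisfied. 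A change of variables gives $a_0\,\textrm{PV}\!\int_0^\infty\tfrac{g(u)}{a_0-u}\dd u=-\eps_0 a_0\!\int_{\R}\tfrac{h(\tau)}{\tau}\,\dd\tau+o(1)=\eps_0 a_0\sqrt{\pi}+o(1)$ as $\eta\to0$, so the positivity condition $-\tfrac{1}{V_s'(0)}+a_0\,\textrm{PV}\!\int_0^\infty\tfrac{g(u)}{a_0-u}\dd u>0$ holds once $\eps_0 a_0\sqrt{\pi}>1$ and $\eta$ is small enough; for such a choice $n_-=1$, $n_+=0$, hence $n_+-n_-\neq0$. A numerical example is equally direct.
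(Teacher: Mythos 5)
Your Steps 1, 2 and 4 follow, in substance, the paper's own route: the reduction to a one-dimensional spectral problem, the explicit inviscid solution selected by the condition at infinity, the function $\Phi_\infty(\mu)=-\tfrac{1}{V_s'(0)}+\mu\int_0^\infty\frac{g(u)}{\mu-u}\,\dd u$, Plemelj boundary values, and the winding-number/argument-principle count identifying the crossings with $n_\pm$. Your Step 4 is a legitimate and in fact more explicit variant of the paper's last section: instead of the convex-hull construction borrowed from \cite{MR3835243} and numerical curves, you prescribe $g$ as a localized odd bump and reconstruct $V_s$ through $\psi'=-g$, $V_s'=1/\psi(V_s)$; the computation of the principal value and the verification of the hypotheses are correct, and this gives a clean analytic proof of the final statement.

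The genuine gap is in Step 3, which is where essentially all the work of the paper lies, and your plan both omits the key mechanism and contains an incorrect assertion. First, "a weighted energy argument with Hardy inequalities and a tailored weight" is not enough of an idea: the specific device needed (and used in the paper, adapting \cite{MR3925144}) is to represent the solution of the resolvent problem \eqref{resolventEQ} as $\psi=(\mu-V_s+\tfrac{1}{ik}\partial_z^2)A$ and to test the resulting equation for $A'$ against an auxiliary dual solution $\varphi$ in $L^2(\rho)$, so that the troublesome zeroth-order term becomes the commutator $[\tfrac{1}{ik}\partial_z^2,U_s']$, of size $O(1/k)$, while the boundary terms at $z=0$ and the unboundedness of $V_s$ are handled by trace and weighted estimates; without this (or an equivalent substitute) the estimate does not close, and the analyticity of $\mu\mapsto\Phi_k(\mu)$ also requires a separate argument (the paper approximates by cut-off flows). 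Second, your deduction that $\Phi_k\to\Phi_\infty$ "the viscous term being $O(1/k)$" is false as stated in your normalization: the solution singled out by the behaviour at infinity does not satisfy no-slip, and restoring the boundary behaviour forces a viscous sublayer of width $k^{-1/2}$ near $z=0$; with your Evans function equal to the slip value $\phi_{\mu,k}(0)$ one finds $\Phi_k(\mu)\approx-\sqrt{-ik\mu}\,\Phi_\infty(\mu)+O(1)$, which diverges rather than converges (the zeros still limit onto those of $\Phi_\infty$, but only after renormalizing by the nonvanishing factor $\sqrt{-ik\mu}$, and proving this is exactly the quantitative content you skip). The paper avoids this by imposing the Neumann/no-slip condition through an explicit boundary-layer corrector as in \eqref{def_phi_bl}, taking the stream-function trace as the Evans function, and then the bound $|\Phi_k-\Phi_\infty|\le Ck^{-1/4}$ follows only from the resolvent estimates of Proposition~\ref{prop_resolvent} applied to the corrector's error term; some such construction and quantitative bound must be supplied before Rouch\'e's theorem can be invoked.
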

The rest of the paper will be devoted to the proof of this
result. Section \ref{sec:k:infinite} is dedicated to the first two
steps alluded to above: rewriting of the problem as a 1-d eigenvalue
problem, and sharp analysis of the case $k=+\infty$.  Section
\ref{sec_resolvent_estimate} is devoted to the third step: we show
how to go from an instability at infinite $k$ to an instability at
finite $k$. Eventually, Section \ref{examples} collects examples,
either analytical or numerical, for which our instability criterion
applies. Let us stress that despite the similarities between
\eqref{LTD} and \eqref{LIBL}, a simple adaptation of the analysis
carried in \cite{MR3835243} is not enough to handle the triple deck
model. The boundary conditions at infinity, and notably the fact that
$u$ is unbounded far away, create specific difficulties. In
particular, we are unable to apply the kind of resolvent estimates
used in \cite{MR3835243} with such conditions at infinity. Instead, we
adapt the stability estimates that we used in \cite{MR3925144} to
obtain Gevrey 2 bounds for solutions of the classical Prandtl
equation.

\section{The infinite frequency spectral problem} \label{sec:k:infinite}
\subsection{Reduction}
We start by injecting solutions of type \eqref{eigenmodes} in \eqref{LTD}. From now on,  we shall work  in Fourier variables only, so we can use without confusion notation $u_k$ instead of $\hat{u}_k$. We find
\begin{align}
    \Big(-\mu_k + V_s \Big) u_k
    - V_s' \int_0^z u_k - \frac{1}{ik} \pa^2_z u_k
    & = - 1 ,\\
    u_k|_{z=0} & = 0,\\
    \lim_{z\to\infty} u_k & = \frac{1}{k},
\end{align}
where we remind that $V_s(z) = z + U_s(z)$. Like in \cite{MR3835243},
we further write $u_k$ in terms of a stream function \(\phi_k\) as
$u_k = (k^{-1}-\phi_k')$ yielding
\begin{equation} \label{OS_LTD}
  \begin{aligned}
    (\mu_k - V_s) \phi_k'  + V'_s \phi_k + \frac{1}{ik} \phi_k^{(3)}
    &  = -1 + \frac{1}{k} \big(\mu_k -  U_s +  z U_s'\big),\\
    \phi_k|_{z=0} = 0, \quad \phi'_k|_{z=0} & = \frac{1}{k}, \\
    \lim_{z\to\infty} \phi_k'(z) & = 0.
  \end{aligned}
\end{equation}
Note that  from the consistency equation, we have
\begin{equation} \label{OS_consistency}
\phi_k(\infty) = -1+ k^{-1} \big(\mu_k - A_s)
\end{equation}
which is again redundant to system \eqref{OS_LTD}. Let us note that the momentum equation is a third order ODE in $z$, and for general $\mu$ should require at most three boundary conditions for solvability. The fact that \eqref{OS_LTD}-\eqref{OS_consistency}  contains four boundary conditions is reminiscent of the fact that it is an eigenvalue problem, with unknowns $(\mu_k, \phi_k)$.

\mspace As explained in the previous section, in order to progress in
the analysis of solutions $(\mu_k, \phi_k)$ of \eqref{OS_LTD}, we
shall consider the formal limit of this system as
$k \rightarrow +\infty$. This raises a problem of boundary conditions:
indeed, at $k = +\infty$, the viscous term
$\frac{1}{ik} \phi_k^{(3)}$ disappears, and the operator in $z$
becomes first order.  Therefore, we drop the condition on
$\phi'_k(0)$, and consider the following infinite frequency spectral
problem
\begin{equation} \label{OS_LTD_inv}
   \begin{aligned}
(\mu_\infty - V_s) \phi_\infty' + V'_s \phi_\infty &  = -1,  \\
\phi_\infty(0) = 0, \quad   \lim_{z\to\infty} \phi_\infty'(z) & = 0. \\
  \end{aligned}
\end{equation}
The formal limit of the consistency condition \eqref{OS_consistency} is
\begin{equation} \label{OS_consistency_inv}
\phi_\infty(\infty) = -1.
\end{equation}
It is again redundant to \eqref{OS_LTD_inv}: as $\phi'_\infty$ still goes to zero at infinity,  taking the limit $z \rightarrow +\infty$ in the momentum equation  yields \eqref{OS_consistency_inv}.  This time, \eqref{OS_LTD_inv}-\eqref{OS_consistency_inv} contains three boundary conditions, for a first order system that would require {\it a priori} only one for solvability with an arbitrary given $\mu$.
We tackle a detailed analysis of this reduced eigenvalue problem in the next paragraph.

\subsection{Spectral analysis of the reduced eigenvalue problem}
We wish here to determine sharp conditions under which system \eqref{OS_LTD_inv} has a non-trivial solution $(\mu_\infty, \phi_\infty)$ with $\Im \mu_\infty > 0$.
Given $\mu \in \C\setminus\R_+$, we denote by $\phi = \phi_{\mu, \infty}$ the solution of
\begin{equation}  \label{OS_mu_inv}
   \begin{aligned}
(\mu - V_s) \phi' +  V'_s \phi&  = -1,  \\
   \lim_{z\to\infty} \phi'(z) & = 0. \\
  \end{aligned}
\end{equation}
As mentioned before,  for general $\mu$, as the first equation is
first order in $z$, one can only retain {\it a priori} one boundary
condition for solvability.  It is crucial that we retain here  the
condition on $\phi'$ at infinity, instead of the condition on $\phi$
at zero. This is a main difference with the treatment of the IBL model
in \cite{MR3835243}. Indeed, contrary to what happens in the IBL case,
the solution of the equation $(\mu- V_s) \phi' -  V'_s \phi  = 1$ with
$\phi(0) = 0$ is in general unbounded at infinity, due to the
unboundedness of $V_s$. Hence, it could not help to solve the
eigenvalue problem \eqref{OS_LTD_inv}. More generally no perturbative
argument could be based on such solution, that is associated to the
Dirichlet condition. On the contrary, system \eqref{OS_mu_inv} has an
explicit solution
\begin{equation*}
  \phi_{\mu, \infty}(z) =  (\mu - V_s(z)) \int_z^\infty
  \frac{1}{(\mu-V_s(y))^2} \, \dd y.
\end{equation*}
From this expression, one deduces that the consistency condition
\eqref{OS_consistency_inv} is satisfied, as expected.  Defining
\begin{equation} \label{def_Phi_infty}
  \Phi_\infty(\mu)  := \phi_{\mu, \infty}(0) =  \mu \int_0^\infty
  \frac{1}{(\mu-V_s(y))^2}\, \dd y,
\end{equation}
we see that $(\mu_\infty, \phi_\infty)$ will be a solution of
\eqref{OS_LTD_inv} if and only if
$$ \Phi_\infty(\mu_\infty) = 0, \quad \phi_\infty := \phi_{\mu_\infty, \infty}.$$

\mspace The rest of the paragraph is devoted to the proof of the
following proposition.
\begin{proposition} \label{main_prop}
Let  $V_s$, $g$ and $n_\pm$ as in Theorem \ref{main_thm}. Then, $\Phi_\infty$ has at least one zero in the unstable half-plane $\{\Im \mu > 0 \}$ if and only if the condition $n_+ - n_- \neq 0$  is satisfied.
\end{proposition}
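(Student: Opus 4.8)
The plan is to study the zeros of $\Phi_\infty(\mu)=\mu\int_0^\infty (\mu-V_s(y))^{-2}\,dy$ in $\{\Im\mu>0\}$ via an argument principle / Penrose-type winding number computation, exactly in the spirit of \cite{penrose1960,MR3835243}. First I would perform the change of variables $u=V_s(y)$, which is a diffeomorphism from $\R_+$ onto $(0,\infty)$ since $V_s'>0$ and $V_s(0)=0$, $V_s(\infty)=\infty$ (because $z$ is unbounded). Writing $dy = du/V_s'(y)$ and noting $1/(\mu-u)^2 = -\partial_u\big(1/(\mu-u)\big)$, an integration by parts transfers the $u$-derivative onto $1/V_s'$; using $V_s(0)=0$ and the decay of $U_s'$ at infinity one gets a representation of the form
\begin{equation*}
  \Phi_\infty(\mu) = \mu\int_0^\infty \frac{1}{(\mu-u)}\,\frac{d}{du}\Big(\frac{1}{V_s'(V_s^{-1}(u))}\Big)\,du + \frac{\mu}{V_s'(0)\,\mu} = 1 + \mu\int_0^\infty \frac{g(u)}{\mu-u}\,du,
\end{equation*}
after identifying $\frac{d}{du}\big(1/V_s'(V_s^{-1}(u))\big) = -V_s''/(V_s')^3\big|_{V_s^{-1}(u)} = -g(u)$ (so that $g$ is exactly the function in the theorem). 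This puts $\Phi_\infty$ in the standard Penrose form: it is the boundary value, for $\mu$ in the upper half-plane, of a Cauchy-type integral of the real profile function $g$.

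Next I would count the zeros of $\Phi_\infty$ in the upper half-plane by the argument principle applied to a large half-disk contour; since $\Phi_\infty(\mu)\to 1$ as $|\mu|\to\infty$ (the integral is $O(1/\mu)$), the number of zeros equals the winding number around $0$ of the image of the real axis, i.e.\ of the curve $\{\Phi_\infty(a+i0^+) : a\in\R\}$. For $a\le 0$ the integrand is real and $\Phi_\infty$ is real (in fact one checks $\Phi_\infty(a)>0$ there since $\mu\int_0^\infty(\mu-V_s)^{-2}$ with $\mu<0$ is a negative times negative... more carefully, for $\mu$ real negative the original integral is positive and $\mu<0$ so $\Phi_\infty<0$ — the sign bookkeeping needs care but the point is the curve stays on the real axis there and only the $a>0$ portion can wind). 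By the Plemelj formula, for $a>0$,
\begin{equation*}
  \Phi_\infty(a+i0^+) = 1 + a\,\mathrm{PV}\!\int_0^\infty \frac{g(u)}{a-u}\,du + i\pi a\, g(a),
\end{equation*}
so the imaginary part vanishes precisely at $a=0$ and at the positive zeros of $g$, and at such a zero $a$ the real part is $1 + a\,\mathrm{PV}\!\int_0^\infty g(u)/(a-u)\,du$; comparing with the theorem (after dividing by the positive constant $V_s'(0)$, or rescaling — I should check the normalization $\Phi_\infty$ vs.\ the condition $-1/V_s'(0)+a\,\mathrm{PV}\int\cdots$, they differ by a positive factor so have the same sign) this is exactly the quantity whose positivity defines $n_\pm$. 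Since $g$ is strictly monotone near each of its positive zeros, the curve crosses the real axis transversally there, and near a zero where $g$ is increasing the curve goes from lower to upper half-plane, while where $g$ is decreasing it goes the other way; the winding number around $0$ therefore picks up contributions $\pm 1$ from each crossing of the \emph{positive} real axis — i.e.\ from each zero $a$ of $g$ with the real part positive — with sign governed by whether $g$ is increasing or decreasing there. Summing, the total winding number is $n_+ - n_-$, so $\Phi_\infty$ has a zero in $\{\Im\mu>0\}$ iff $n_+-n_-\neq 0$; and finiteness of $n_\pm$ follows from the decay assumptions on $U_s$ and its derivatives, which force $g$ and its derivative to decay, hence $g$ has only finitely many zeros in any region where $g$ is nonzero at infinity (and none accumulating at $0$ or $\infty$), with the monotonicity hypothesis excluding accumulation.

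I expect the main obstacle to be the careful justification of the winding-number bookkeeping: making rigorous the claim that only crossings of the \emph{positive} real semi-axis contribute (handling the behavior as $a\to0^+$ where both $\Phi_\infty\to$ a finite limit — actually $\Phi_\infty(0^+)$ needs to be computed, it should be related to $\int_0^\infty g$, and one must check the curve does not pass through the origin there), the behavior as $a\to+\infty$ (where $\Phi_\infty\to1$ on the positive real axis, so the curve returns to $1$), and the matching of the $a<0$ real segment with the $a>0$ piece into a single closed curve whose winding number is computed. One also has to handle the possibility that $g$ vanishes to higher order or changes sign — but the strict-monotonicity-near-zeros hypothesis is precisely what rules this out and makes each crossing count cleanly as $\pm1$. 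A secondary technical point is verifying $\Phi_\infty$ is holomorphic and nonvanishing on the boundary contour (no zeros on $\R$ other than possibly at the $g$-zeros, which are on the boundary of the domain and handled by the transversality), so that the argument principle applies after an arbitrarily small indentation; this is routine given the regularity of $g$. Finally, the existence of shear flows with $n_+-n_-\neq0$ is deferred to Section \ref{examples} and is not part of this proposition's proof, though a one-line remark that a single transversal zero of $g$ with positive real part suffices would close the loop.
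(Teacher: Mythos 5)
Your overall strategy coincides with the paper's: integrate by parts to put $\Phi_\infty$ in Penrose form, apply the argument principle on a contour in the upper half-plane, pass to the boundary by Plemelj, and count transversal crossings of the positive real axis with signs dictated by the monotonicity of $g$ near its zeros. However, two computations at the core of your argument are wrong, and they are not cosmetic. First, the constant produced by the boundary term at $u=0$: the correct representation is $\Phi_\infty(\mu) = -\tfrac{1}{V_s'(0)} + \mu\int_0^\infty \tfrac{g(u)}{\mu-u}\,\dd u$ (the paper's \eqref{expression_Phi_g}), not $1 + \mu\int_0^\infty \tfrac{g(u)}{\mu-u}\,\dd u$; you have both a sign slip in the boundary term and an unjustified replacement of $1/V_s'(0)$ by $1$ (nothing forces $V_s'(0)=1$). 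Your attempted reconciliation --- that your real-part condition $1 + a\,\mathrm{PV}\!\int_0^\infty g(u)/(a-u)\,\dd u>0$ and the theorem's $-\tfrac{1}{V_s'(0)} + a\,\mathrm{PV}\!\int_0^\infty g(u)/(a-u)\,\dd u>0$ ``differ by a positive factor'' --- cannot work: they differ by the additive constant $1+\tfrac{1}{V_s'(0)}>0$, so they select genuinely different zeros of $g$, and the winding number you compute is not $n_+-n_-$ as defined in Theorem~\ref{main_thm}. Second, the behaviour at infinity: $\mu\int_0^\infty \tfrac{g(u)}{\mu-u}\,\dd u$ is \emph{not} $O(1/\mu)$; it converges to $\int_0^\infty g(u)\,\dd u = \tfrac{1}{V_s'(0)}-1$, so $\Phi_\infty(\mu)\to -1$ (Lemma~\ref{lemma_mu_far}), not $+1$. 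This limit is exactly what keeps the large-$|\mu|$ arc and the tail $a\to+\infty$ of the boundary curve away from the positive real semi-axis, so that only the finitely many transversal crossings at positive zeros of $g$ contribute; with your claimed limit $+1$ the curve would terminate \emph{on} the positive real axis and the crossing bookkeeping you yourself flag as the delicate point would genuinely break down.

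Two smaller points in the same vein: the Plemelj boundary value from $\Im\mu>0$ carries $-i\pi a g(a)$, not $+i\pi a g(a)$ (this alone only swaps $n_+$ and $n_-$, so it does not endanger the ``iff $n_+\neq n_-$'' conclusion, but it shows the sign bookkeeping has not actually been carried out); and the exclusion of crossings on the left part of the contour and near $\mu=0$ is not automatic from ``the curve is real for $a\le 0$'' --- the paper proves $\Re\Phi_\infty<0$ for $\Re\mu\le0$ and $\Phi_\infty(\mu)\to-\tfrac{1}{V_s'(0)}<0$ as $\mu\to0$ (Lemma~\ref{lemma_a_neg}); your guess that $\Phi_\infty(0^+)$ is ``related to $\int_0^\infty g$'' confuses the $\mu\to0$ and $\mu\to\infty$ limits. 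In short, the architecture of your proof is the right one, but with the representation and asymptotics as you wrote them the crossing count does not identify the quantity $n_+-n_-$ of the statement, so the proof as written does not establish the proposition.
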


\noindent
 We first state and prove  two lemmas describing the behaviour of $\Phi_\infty$ in various regions of $\C\setminus\R_+$:
\begin{lemma} \label{lemma_mu_far}
For any \(\delta>0\) there exists \(R > 0\) such that
  \begin{equation*}
    |\Phi_{\infty}(\mu) + 1| \le \delta
  \end{equation*}
  for all $\mu \in \C\setminus\R_+$ with \(|\mu| \ge R\).
\end{lemma}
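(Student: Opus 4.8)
The plan is to analyze the explicit formula
\[
  \Phi_\infty(\mu) = \mu \int_0^\infty \frac{\dd y}{(\mu - V_s(y))^2}
\]
directly for $|\mu|$ large, and to show it tends to $-1$ uniformly. First I would change variables via $u = V_s(y)$. Since $V_s' > 0$ on $\R_+$ with $V_s(0) = 0$ and $V_s(y) \to +\infty$ (because $V_s(z) = z + U_s(z)$ with $U_s$ bounded), the map $y \mapsto V_s(y)$ is a smooth diffeomorphism from $\R_+$ onto $\R_+$, and $\dd u = V_s'(y)\,\dd y$. Writing $G(u) := 1/V_s'(V_s^{-1}(u))$ (a positive smooth function, bounded and with bounded derivatives by the decay assumptions on $U_s$, and with $G(u) \to 1$ as $u \to \infty$ since $U_s' \to 0$), we get
\[
  \Phi_\infty(\mu) = \mu \int_0^\infty \frac{G(u)}{(\mu - u)^2}\,\dd u.
\]

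Next I would extract the leading behaviour. The natural comparison is with $G \equiv 1$: for $\mu \in \C \setminus \R_+$,
\[
  \mu \int_0^\infty \frac{\dd u}{(\mu - u)^2} = \mu \left[ \frac{1}{\mu - u} \right]_{u=0}^{u=\infty} = \mu \left( 0 - \frac{1}{\mu} \right) = -1
\]
exactly (the boundary term at $u = +\infty$ vanishes because $\mu \notin \R_+$ so $|\mu - u| \to \infty$). Hence
\[
  \Phi_\infty(\mu) + 1 = \mu \int_0^\infty \frac{G(u) - 1}{(\mu - u)^2}\,\dd u.
\]
So it remains to show the right-hand side is $O(\delta)$ for $|\mu|$ large. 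Since $G - 1$ is bounded and integrable (indeed $G(u) - 1 = (1 - V_s')/V_s' \vert_{V_s^{-1}(u)}$ decays with the decay of $U_s'$, and the Jacobian factor preserves integrability), I would split the integral at a large cutoff $M$: on $u > M$ use $|G(u) - 1| \le \varepsilon$ small and note $\mu \int_M^\infty (\mu - u)^{-2}\,\dd u = \mu/(\mu - M)$, which is bounded uniformly in $\mu \in \C\setminus\R_+$ (in fact bounded by a constant depending only on the argument of $\mu$, and $\le$ some absolute constant once $|\mu|$ is comparable to or larger than $M$; more carefully one uses $|\mu - u| \ge c(|\mu| + u)$ for $\mu$ away from $\R_+$... but since we only need $|\mu|$ large with $\mu \in \C\setminus\R_+$, the uniform bound $|\mu/(\mu-u)| \le$ const for $u \ge 0$ fails near $\R_+$; so instead I bound $|\mu|\int_M^\infty |\mu - u|^{-2}\,\dd u \le |\mu| \cdot \frac{1}{\dist(\mu, [M,\infty))} \cdot \big(\text{length factor}\big)$ and observe $\dist(\mu,\R_+) $ can be small). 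The cleaner route: on the tail, bound $\big| \mu \int_M^\infty \frac{G-1}{(\mu-u)^2}\dd u \big| \le \varepsilon |\mu| \int_M^\infty \frac{\dd u}{|\mu - u|^2} = \varepsilon \big| \frac{\mu}{\mu - M} \big| \le \varepsilon \big(1 + \frac{M}{|\mu - M|}\big)$, and this is $\le 2\varepsilon$ once $|\mu| \ge 2M$ regardless of the argument, since then $|\mu - M| \ge |\mu| - M \ge M$. On the bounded part $0 \le u \le M$, use $|G(u) - 1| \le \|G - 1\|_\infty$ and $|\mu| \int_0^M |\mu - u|^{-2}\,\dd u \le |\mu| \cdot M \cdot (|\mu| - M)^{-2}\to 0$ as $|\mu| \to \infty$. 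Combining, for $|\mu| \ge R$ with $R$ chosen large depending on $\varepsilon$ and $M$, we get $|\Phi_\infty(\mu) + 1| \le 2\varepsilon + o_{R}(1) \le \delta$.

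The main obstacle is purely the non-uniformity of $|\mu/(\mu - u)|$ when $\mu$ approaches the positive real axis: one cannot simply bound $|\mu - u| \gtrsim |\mu|$. The fix, as indicated, is to avoid needing that: for the tail integral one keeps the exact antiderivative and uses that once $|\mu| \ge 2M$ the quantity $|\mu - M| \ge |\mu| - M \ge \tfrac12 |\mu| \ge M$ is controlled unconditionally, while for the head integral the factor $M$ and the shrinking $(|\mu|-M)^{-2}$ together beat the growth of $|\mu|$. Everything else — the change of variables, the verification that $G$ is bounded, smooth, integrable perturbation of $1$, and the identity giving the exact value $-1$ for the comparison integral — is routine given the hypotheses on $U_s$ stated before Theorem~\ref{main_thm}.
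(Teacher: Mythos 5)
The decomposition you set up (change of variables $u=V_s(y)$, exact evaluation of the comparison integral giving $-1$, split at a cutoff $M$) is reasonable, and the head estimate on $[0,M]$ is fine since $|\mu-u|\ge|\mu|-M$ there. But the tail estimate contains a genuine error: the chain
\begin{equation*}
  \Big|\mu \int_M^\infty \frac{G-1}{(\mu-u)^2}\,\dd u\Big|
  \le \varepsilon\,|\mu|\int_M^\infty \frac{\dd u}{|\mu-u|^2}
  = \varepsilon\Big|\frac{\mu}{\mu-M}\Big|
\end{equation*}
is false at the middle equality. The identity $\mu\int_M^\infty(\mu-u)^{-2}\dd u=-\mu/(\mu-M)$ holds only for the \emph{complex} integrand, without absolute values; once you apply the triangle inequality, you must estimate $\int_M^\infty |\mu-u|^{-2}\dd u$, which for $\mu=a+ib$ with $a>M$ equals $\tfrac{1}{|b|}\bigl(\tfrac{\pi}{2}-\arctan\tfrac{M-a}{|b|}\bigr)\approx \pi/|b|$. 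Hence your tail bound is of order $\varepsilon|\mu|/|\Im\mu|$, which is not uniformly small on $\{\mu\in\C\setminus\R_+:\,|\mu|\ge R\}$: the lemma must cover $\mu$ arbitrarily close to $[R,\infty)$, exactly the region where this blows up. You correctly identified this non-uniformity as ``the main obstacle,'' but the proposed fix quietly reinstates the false step, because the uniform control of $\mu/(\mu-M)$ only survives if you never take the modulus inside the integral — and with a general density $G-1$ you cannot avoid doing so by absolute bounds alone.

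The repair requires exploiting cancellation near the positive real axis rather than size. This is what the paper does: it first integrates by parts (in $y$) to trade the double pole $(\mu-V_s)^{-2}$ for a simple pole, obtaining $\Phi_\infty(\mu)+1=\int_0^\infty \frac{g(u)\,u}{\mu-u}\,\dd u$ with $g(u)u$ smooth, Lipschitz and rapidly decaying; then, for $\Re\mu$ large and $|\Im\mu|$ small, it splits the near-singularity contribution as $\int_{-1}^1\frac{h(a+v)-h(a)}{ib-v}\dd v + h(a)\int_{-1}^1\frac{\dd v}{ib-v}$ with $h(u)=g(u)u$, using the Lipschitz bound for the first term and the Plemelj formula (finiteness of $\lim_{b\to0^+}\int_{-1}^1\frac{\dd v}{ib-v}$) together with $h(a)\to0$ for the second. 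If you integrate your tail by parts once (turning $(\mu-u)^{-2}$ acting on $G-1$ into $(\mu-u)^{-1}$ acting on $G'$) you land on essentially the same expression, and you will find that even then the region $\Re\mu\ge M$, $|\Im\mu|$ small cannot be handled by absolute values (the integral of $|\mu-u|^{-1}$ diverges logarithmically as $\Im\mu\to0$); the principal-value/Plemelj decomposition, or an equivalent cancellation argument, is unavoidable. As written, your proof does not establish the uniformity over all of $\C\setminus\R_+$ that the lemma asserts.
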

\begin{proof} We write
\begin{equation} \label{re_expression_Phi_mu}
  \begin{aligned}
    \Phi_{\infty}(\mu) = \mu \int_0^\infty \frac{1}{V_s'}\;
    \underbrace{\frac{V_s'}{(\mu-V_s)^2}}_{=\left(\frac{1}{\mu-V_s}\right)'}
    \, \dd y
    &= -\frac{1}{V_s'(0)}  -\mu  \int_0^\infty \frac{V_s''}{V_s'^2}\, \frac{1}{V_s-\mu}\, \dd y
  \end{aligned}
\end{equation}
By elementary calculation, $\frac{-1}{V_s'(0)}  + \int_0^\infty \frac{V_s''}{V_s'^2}\, \dd y  = \frac{-1}{V_s'(0)} - \left[\frac{1}{V_s'}\right]_0^\infty  = -1$.
Hence we find
\begin{equation*}
  \Phi_{\infty}(\mu) + 1
  = \int_0^\infty \frac{V_s''}{V_s'^2}\;  \frac{V_s}{\mu-V_s}\, \dd y
  = \int_0^{\infty} \frac{g(u) u}{\mu- u} \, \dd u,
\end{equation*}
where $g$ is the function in Theorem \ref{main_thm}. Note that $g$ decays fast at infinity because $V''_s = U''_s$ does, and because $V'_s \rightarrow 1$ at infinity. We decompose:
$$ \int_0^{\infty} \frac{g(u) u}{\mu- u} du  =   \int_0^{+\infty} 1_{\{|u-\Re \mu| \ge 1\}} \frac{g(u) u}{\mu - u} du  +  \int_0^{+\infty} 1_{\{|u-\Re \mu| \le 1\}} \frac{g(u) u}{\mu - u} du   $$
By dominated convergence, the first term  goes to zero when $|\mu| \rightarrow +\infty$ in the region $\C\setminus\R_+$, and the second one goes to zero when $|\mu| \rightarrow +\infty$ in the region $\{\Re \mu \le -1\} \cup \{ \Im \mu > 1\}$. Eventually, for $ 0 < |\Im \mu| \le 1$, and  $\Re \mu \ge 1$, we write the second term as follows ($\mu = a+ib$):
$$ \int_0^{+\infty} 1_{\{|u-\Re \mu| \le 1\}} \frac{g(u) u}{\mu - u} du = \int_{-1}^{1} \frac{g(v+a)(v+a) - g(a) a}{ib-v} dv \: + \: g(a) a \int_{-1}^{1} \frac{1}{ib-v} dv $$
The first term vanishes when $|\mu| \rightarrow +\infty$, that is $a \rightarrow +\infty$, invoking again dominated convergence, and the second one goes to zero as well taking into account that $g(a) a \rightarrow 0$ and that $\lim_{b \rightarrow 0^+}  \int_{-1}^{1} \frac{1}{ib-v} dv$ exists by Plemelj formula. This concludes the proof of the lemma.
\end{proof}
\begin{lemma} \label{lemma_a_neg}
For all $\mu \in \C\setminus\R_+$ with $\Re \mu \le 0$,
  \begin{equation*}
    \Re \Phi_{\infty}(\mu) < 0.
  \end{equation*}
  Moreover,
  $$\lim_{\substack{\mu \rightarrow 0\\ \mu \in \C\setminus\R_+}} \Re \Phi_\infty(\mu) = -\frac{1}{V'_s(0)} < 0.$$
\end{lemma}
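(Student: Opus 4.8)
The plan is to prove the two assertions separately, using throughout the explicit formula $\Phi_\infty(\mu)=\mu\int_0^\infty(\mu-V_s(y))^{-2}\,\dd y$. First note that for $\mu\in\C\setminus\R_+$ this integral converges absolutely: since $V_s$ is a smooth strictly increasing bijection of $[0,\infty)$ onto $[0,\infty)$ (as $V_s(0)=0$, $V_s'>0$ and $V_s(y)\to\infty$), one has $\inf_{y\ge0}|\mu-V_s(y)|=\dist(\mu,\R_+)>0$, while $|\mu-V_s(y)|\to\infty$ like $y$, so the integrand is $O(y^{-2})$. For the inequality $\Re\Phi_\infty(\mu)<0$, write $(\mu-V_s)^{-2}=(\overline{\mu}-V_s)^2/|\mu-V_s|^4$ and take real parts, so that $\Re\Phi_\infty(\mu)=\int_0^\infty\Re\!\big[\mu(\overline{\mu}-V_s(y))^2\big]\,|\mu-V_s(y)|^{-4}\,\dd y$. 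A one-line computation with $s:=V_s(y)\ge0$ gives $\Re[\mu(\overline{\mu}-s)^2]=\Re\mu\,(|\mu|^2+s^2)-2s|\mu|^2$. When $\Re\mu\le0$ and $\mu\ne0$, both terms on the right are $\le0$, and they vanish simultaneously only when $s=0$, i.e.\ $y=0$; hence the integrand is strictly negative for every $y>0$ and $\Re\Phi_\infty(\mu)<0$.

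For the limit at $\mu=0$ the idea is to integrate by parts in order to isolate the singular part of $\Phi_\infty$ near the origin. After the change of variables $u=V_s(y)$ one has $\Phi_\infty(\mu)=\mu\int_0^\infty W_1(u)(\mu-u)^{-2}\,\dd u$, where $W_1(u):=1/V_s'(V_s^{-1}(u))$ is smooth, bounded, bounded below, with $W_1(0)=1/V_s'(0)$, and such that $W_1',W_1''$ are bounded while $W_1''$ and $u\mapsto W_1''(u)\ln(2+u)$ are integrable (this uses $\inf V_s'>0$, the fast decay of $V_s'-1$, and $V_s^{-1}(u)\asymp u$). Integrating by parts once, via $(\mu-u)^{-2}=\partial_u(\mu-u)^{-1}$ and the vanishing of the boundary term at $u=+\infty$, gives $\Phi_\infty(\mu)=-W_1(0)-\mu\int_0^\infty W_1'(u)(\mu-u)^{-1}\,\dd u$. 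Integrating by parts a second time, via $(\mu-u)^{-1}=-\partial_u\ln(\mu-u)$ (with the branch of the logarithm that is smooth along $u\ge0$) and the fast decay of $W_1'$, yields
\[
  \Phi_\infty(\mu)=-W_1(0)-W_1'(0)\,\mu\ln\mu-\mu\int_0^\infty W_1''(u)\ln(\mu-u)\,\dd u .
\]
Now $\mu\ln\mu\to0$, and the remaining integral is bounded uniformly for $|\mu|\le1$: using $|\ln(\mu-u)|\le\bigl|\ln|\mu-u|\bigr|+\pi$ and splitting the $u$-integral according to whether $|\mu-u|>1$ or $\le1$, the far part is dominated by $\int_0^\infty|W_1''(u)|\ln(2+u)\,\dd u<\infty$ and the near part by $\|W_1''\|_{L^\infty}\sup_{|\mu|\le1}\int_{\{u\ge0:\,|\mu-u|\le1\}}\bigl|\ln|\mu-u|\bigr|\,\dd u<\infty$, the point being that $u\mapsto\ln|\mu-u|$ is locally integrable with a bound independent of $\mu$. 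Therefore $\mu\int_0^\infty W_1''(u)\ln(\mu-u)\,\dd u=O(|\mu|)$ and $\Phi_\infty(\mu)\to-W_1(0)=-1/V_s'(0)$; in particular $\Re\Phi_\infty(\mu)\to-1/V_s'(0)<0$.

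The step I expect to be the main obstacle is precisely what forces the second integration by parts. Since $\mu\to0$ may happen tangentially to $\R_+$, the distance $\dist(\mu,\R_+)$ can tend to $0$ arbitrarily faster than $|\mu|$, so the crude bound $|\mu|\int_0^\infty|W_1'(u)|\,|\mu-u|^{-1}\,\dd u$ coming from a single integration by parts is \emph{not} uniformly small — the kernel $|\mu-u|^{-1}$ fails to be locally integrable with a $\mu$-independent constant. Replacing it by the kernel $\bigl|\ln|\mu-u|\bigr|$, at the cost of the harmless extra term $\mu\ln\mu$, is what makes the convergence uniform; everything else is routine bookkeeping.
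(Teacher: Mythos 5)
Your proof is correct and follows essentially the same route as the paper: the same pointwise sign computation of the numerator of $\Re \Phi_\infty$ for $\Re \mu \le 0$, and for the limit at the origin the same key identity \eqref{expression_Phi_g}, i.e. $\Phi_\infty(\mu)=-1/V_s'(0)+\mu\int_0^\infty g(u)(\mu-u)^{-1}\,\dd u$, which you reach by the change of variables $u=V_s(y)$ and one integration by parts (your $W_1'=-g$). Your second integration by parts only serves to justify, uniformly in the tangential regime $\dist(\mu,\R_+)\ll|\mu|$, the paper's terse assertion that this integral diverges merely logarithmically as $\mu\to 0$ — a point the paper leaves implicit and for which your cancellation argument via the bounded imaginary part of the complex logarithm is a valid (and welcome) completion.
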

\begin{proof}
  From the definition \eqref{def_Phi_infty}, denoting $\mu = a + i b$,  we infer:
  \begin{equation*}
    \Re \Phi_\infty(\mu)
    = \int_0^\infty \frac{a ((a-V_s)^2-b^2) + 2b^2(a-V_s)}
    {|\mu-V_s(y)|^4}\, \dd y.
  \end{equation*}
  For the numerator we find
  \begin{equation*}
    a ((a-V_s)^2-b^2) + 2b^2(a-V_s)
    = a (a-V_s)^2 + b^2 (a-2V_s) \le 0
  \end{equation*}
  as $a \le 0$.  Unless \(a=b=0\) there exists also always \(z\) for
  which it is strictly negative, which concludes the proof of the
  first inequality. The other one is a simple consequence of the
  expression
  \begin{equation}  \label{expression_Phi_g}
    \Phi_{\infty}(\mu) =  -\frac{1}{V_s'(0)}  + \mu  \int_0^\infty  \frac{g(u)}{\mu-u}\, \dd y,
  \end{equation}
  see \eqref{re_expression_Phi_mu} and the definition of $g$ in
  Theorem~\ref{main_thm}. The integral
  $\int_0^\infty \frac{g(u)}{\mu-u}\, \dd u$ only diverges
  logarithmicaly as \(\mu \to 0\), hence the result.
\end{proof}

\noindent
We have now all the ingredients to conclude our analysis of the zeroes of $\Phi_\infty$ in $\{ \Im \mu > 0 \}$. By  Lemma \ref{lemma_mu_far}, there exists $R > 0$ such that
$$ |\Phi_\infty(\mu) +1| \le \frac{1}{4}, \quad \Im \mu > 0, \quad |\mu| \ge R.$$
Let
$$  \Omega_{\epsilon} := \{\mu \in \C,  \: \Im \mu > \epsilon,  \:  |\mu| \le R\} $$
With our choice of $R$, $\Phi_\infty$ has a zero in $\{ \Im \mu > 0 \}$ if and only if it has one in $\Omega_\eps$  for some $\eps > 0$ small enough.  As $\Phi_\infty$ is a holomorphic function, its zeroes in $\{ \Im \mu > 0 \}$ are isolated, so that we can restrict to $\eps$ along a sequence going to zero and such that $\Phi_\infty$ does not vanish at $\pa \Omega_\eps$.  Then, the number $n_\eps$ of its zeroes in $\Omega_\eps$, counted with multiplicity, is given by
\begin{equation*}
 n_\eps =  \frac{1}{2i \pi}\oint_{\partial\Omega_{\epsilon}} \frac{\Phi_\infty'(\zeta)}{\Phi_\infty(\zeta)}\, \dd \zeta.
\end{equation*}
Let \(\gamma_{\epsilon}\) be a direct parametrisation of the curve $\pa \Omega_\eps$. We have
\begin{equation*}
 \frac{1}{2i \pi}  \oint_{\partial\Omega_{\epsilon}}
  \frac{\Phi_\infty'(\zeta)}{\Phi_\infty(\zeta)}\, \dd \zeta
  =   \frac{1}{2i \pi} \int \frac{\Phi_{\infty}'(\gamma_\eps(t))}{\Phi_\infty(\gamma_\eps(t))}\gamma'_\eps(t)\, \dd t
  =  \frac{1}{2i \pi} \int \frac{(\Phi_{\infty} \circ \gamma_\eps)'(t)}{(\Phi_{\infty} \circ \gamma_\eps)(t)}\, \dd t = \frac{1}{2i \pi}  \int_{\Phi_\infty(\pa \Omega_\eps)} \frac{1}{\xi} \,  \dd \xi
\end{equation*}
so that the number of roots equals the winding number of the curve
$\Phi_\infty(\pa \Omega_\eps)$ around $0$. To compute this winding
number, one can choose a complex logarithm with a branch cut along the
positive real axis. The winding number is given by the sum of the
jumps of this logarithm, which corresponds to the number of crossings
of the curve $\Phi_\infty(\pa \Omega_\eps)$ with the positive real
axis. More precisely,
\begin{align} \label{diff_crossings}
  \frac{1}{2i \pi}  \oint_{\partial\Omega_{\epsilon}}
  \frac{\Phi_\infty'(\zeta)}{\Phi_\infty(\zeta)}\, \dd \zeta
  & =  \: \text{number of crossings from below}
    - \text{number of  crossings from above}.
\end{align}
We remind that the intersection of
$\Phi_\infty(\pa \Omega_\eps \cap \{|\mu| = R \})$ with the positive
real axis is empty.  Hence it remains to understand the crossings of
$\Phi_\infty([-R,R]+i\eps)$, for $\eps > 0$ going to zero. By
Lemma~\ref{lemma_a_neg}, there exists $\rho > 0$, such that
$\Re \Phi_\infty < 0$ for $-R < a < \rho$, so that we can restrict to
$a \in [\rho,R]$. Over this interval, formula \eqref{expression_Phi_g}
yields that
\begin{equation} \label{uniform_conv_Phi_infty}
  \begin{aligned}
    \Phi_\infty(a+ib) &=  -\frac{1}{V'_s(0)}
    + a \int_0^{+\infty} \frac{g(u)}{a+ib-u}\, \dd u +  i b
    \int_0^{+\infty} \frac{g(u)}{a+ib-u}\, \dd u \\
    & \xrightarrow[b \rightarrow 0^+]{} -\frac{1}{V'_s(0)}
    + a  PV \int_0^{+\infty} \frac{g(u)}{a-u}\, \dd u \: - \:   i \pi  a g(a)
  \end{aligned}
\end{equation}
using Plemelj formula, where the convergence is uniform in
$a \in [\rho, R]$.  We extend the definition of $\Phi_\infty$ over
$\R_+^*$ by
\begin{equation*}
  \Phi_\infty(a) :=   -\frac{1}{V'_s(0)} + a  PV \int_0^{+\infty}
  \frac{g(u)}{a-u}\, \dd u \: - \:   i \pi  a g(a), \quad a > 0.
\end{equation*}
We notice that the quantity $n_+$, resp.  $n_-$, defined in
Theorem~\ref{main_thm}, corresponds to the number of crossings from
above, resp. from below, of the curve $\Phi_\infty(\R_+^*)$ with the
positive real axis. Lemma~\ref{lemma_mu_far} being uniform in $b$, we
still have $\Phi_\infty(a) \rightarrow -1$ as $a \rightarrow
+\infty$. Also, as in Lemma~\ref{lemma_a_neg},
$\lim_{a \rightarrow 0^+} \Re \Phi_\infty(a) = -\frac{1}{V'_s(0)} <
0$.  It follows that $n_\pm$ is finite and coincides with the
crossings of $\Phi_\infty([\rho,R])$ up to taking $\rho$ smaller and
$R$ larger. Finally, from the uniform convergence in
\eqref{uniform_conv_Phi_infty}, we deduce that for $\eps > 0$ small
enough, \eqref{diff_crossings} is equal to $n_+ - n_-$. This concludes
the proof of Proposition~\ref{main_prop}.

\mspace

\section{Persistence of the instability at finite $k$} \label{sec_resolvent_estimate}
This section is devoted to the proof of Theorem \ref{main_thm},  except for the last statement, which will be considered in the next section. We assume that $n_+ - n_- \neq 0$, see the statement of the theorem. Our goal is to show that \eqref{LTD} has solutions of type \eqref{eigenmodes} with $\liminf_{k \rightarrow +\infty} \Im \mu_k > 0$. In other words, we need to prove that for all $k$ large enough, there exists $\mu_k, \phi_k$ solving \eqref{OS_LTD} with $\Im \mu_k \ge \delta > 0$ for  some $\delta$ independent of $k$.

\subsection{Resolvent Estimate}
The first step is to consider the following resolvent problem
\begin{equation} \label{resolventEQ}
  \left\{
  \begin{aligned}
  &(\mu-V_s) \psi' + (V_s') \psi + \frac{1}{ik} \psi'''
  = F \\
    &\psi'|_{z=0}=0, \lim_{z\to\infty} \psi'(z)=0, \lim_{z\to\infty} \psi = 0.
  \end{aligned}
  \right.
\end{equation}
Note that we do not prescribe the value of $\psi$ at zero.  Let
$\rho(z) = 1+z^m$, $m$ large. We denote by $\ipr{\,}{}$ the
$L^2(\rho)$ scalar product, and set
$\| \cdot \| = \ipr{\cdot}{\cdot}^{1/2}$.

\begin{proposition} \label{prop_resolvent} Let $\mu_m > 0$. There
  exists $k_m$, depending on $\mu_m$ such that for any $\mu$ with
  $\Im \mu \ge \mu_m$, for any $k \ge k_m$ and any $F \in L^2(\rho)$,
  the system \eqref{resolventEQ} has a unique solution
  $\psi = \psi_{\mu,k}$ of the form
  \begin{equation*}
    \psi =  \Big(\mu - V_s + \frac{1}{ik} \frac{\dd^2}{\dd z^2} \Big) A,
    \quad A'\vert_{z=0} = 0, \quad \lim_{z \to \infty} A = 0.
  \end{equation*}
  where $A \in H^3(\R_+)$ satisfies the estimates
  \begin{equation} \label{bound_A}
    \Im \mu\, \|A'\|^2 + \frac{1}{k} \|A''\|^2 \le \frac{4}{(\Im \mu)^3} \|F\|^2
  \end{equation}
  and
  \begin{equation} \label{bound_A_bis}
    \Im\mu\, \|A''\|^2_{L^2}  + \frac{1}{k}  \|A'''\|_{L^2}^2
    \le  C  \Big( \frac{\sqrt{k}}{(\Im \mu)^{9/2}}  + \frac{1}{(\Im \mu)^2} \Big)  \|F\|^2
  \end{equation}
  for a constant $C$ depending on $\mu_m$.
  Moreover, the map $\mu \rightarrow \psi_{\mu,k}(0)$ is analytic in $\{\Im \mu > \mu_m\}$.
\end{proposition}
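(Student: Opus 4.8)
The plan is to follow the scheme of \cite{MR3925144}: reduce \eqref{resolventEQ} to an equation for the auxiliary unknown $A$, prove the a priori bounds \eqref{bound_A}--\eqref{bound_A_bis} by a weighted energy method, and then obtain existence, uniqueness and analyticity by soft arguments. For \textbf{Step 1 (reduction)}, writing $\psi=\big(\mu-V_s+\tfrac1{ik}\partial_z^2\big)A$ and using that $V_s''=U_s''$, $V_s'''=U_s'''$ decay fast, a direct computation turns $(\mu-V_s)\psi'+V_s'\psi+\tfrac1{ik}\psi'''=F$ into
\begin{equation*}
  (\mu-V_s)^2 A' + \frac{2}{ik}\,\partial_z\big[(\mu-V_s)A''\big] + \frac{1}{(ik)^2}A^{(5)}
  = F + \frac{3U_s''}{ik}A' + \frac{U_s'''}{ik}A,
\end{equation*}
complemented by $A'|_{z=0}=0$, the relation $-V_s'(0)A(0)+\tfrac1{ik}A'''(0)=0$ forced by $\psi'|_{z=0}=0$, and decay of $A$ and its derivatives at infinity (which encodes $\psi,\psi'\to0$); this is a well-determined problem. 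Since $\Im\mu\ge\mu_m>0$ we have $|\mu-V_s(z)|\ge\Im\mu$ for all $z$, so no critical layer occurs, and the negative powers of $\Im\mu$ in \eqref{bound_A}--\eqref{bound_A_bis} are the price of inverting $(\mu-V_s)$. I will construct $A$ directly and then set $\psi:=(\mu-V_s+\tfrac1{ik}\partial_z^2)A$; uniqueness of $\psi$ in the prescribed class then reduces to that of $A$.

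\textbf{Step 2 (a priori estimates --- the crux).} I will test the reduced equation against suitably weighted multipliers built from $\overline A$ and $\overline{A'}$, in the spirit of \cite{MR3925144}, so that, after integration by parts and taking real and imaginary parts, the inviscid term $(\mu-V_s)^2A'$ produces a coercive contribution $\gtrsim\Im\mu\,\|A'\|^2$ and the viscous term $\tfrac2{ik}\partial_z[(\mu-V_s)A'']$ produces $\gtrsim\tfrac1k\|A''\|^2$, the remaining terms being absorbed for $\Im\mu\ge\mu_m$ and $k\ge k_m$; bounding the right-hand side by Cauchy--Schwarz gives \eqref{bound_A}. Two points need care: the polynomial weight $\rho=1+z^m$, whose commutators with $\partial_z$ and with the unbounded coefficient $(\mu-V_s)^2\sim z^2$ generate lower order terms — this is what forces $m$ large; and the boundary at $z=0$, where $\psi(0)$ and $A(0)$ are not prescribed, but the relation $A(0)=\tfrac{1}{ikV_s'(0)}A'''(0)$ makes all boundary contributions carry a factor $k^{-1}$ (or better) and hence absorbable for $k$ large. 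Estimate \eqref{bound_A_bis} follows by testing a differentiated version of the equation (equivalently against $\overline{A''}$); the one new difficulty is that pairing $\tfrac1{(ik)^2}A^{(5)}$ with $\tfrac2{ik}\partial_z[(\mu-V_s)A'']$ leaves, after one integration by parts, a term of order $k^{-1/2}\|A'''\|\,\|A''\|$, together with the need to control $A'''(0)$ through the equation, and this is exactly what produces the factor $\sqrt k\,(\Im\mu)^{-9/2}$ and nothing better.

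\textbf{Step 3 (existence, uniqueness, analyticity).} Uniqueness is immediate from \eqref{bound_A} with $F=0$, using $A(\infty)=0$ and Hardy's inequality to also control $\|A\|$. For existence I will use a Galerkin scheme --- equivalently, realize the operator of the reduced equation as a relatively compact perturbation of a boundedly invertible one and invoke the Fredholm alternative together with the a priori bound --- the uniform estimates \eqref{bound_A}--\eqref{bound_A_bis} giving the compactness needed to pass to the limit and to place $A$ in $H^3(\R_+)$. Finally, for fixed $k\ge k_m$ the operator of the reduced equation depends polynomially on $\mu$ and is boundedly invertible on $\{\Im\mu>\mu_m\}$ with locally uniformly bounded inverse; hence $\mu\mapsto A_{\mu,k}$ is holomorphic into $H^3(\R_+)$ (its $\mu$-derivative solves the reduced equation with a new right-hand side linear in $A_{\mu,k}$), and composing with the bounded linear functional $A\mapsto(\mu-V_s(0))A(0)+\tfrac1{ik}A''(0)$, continuous on $H^3(\R_+)$ by Sobolev embedding, gives analyticity of $\mu\mapsto\psi_{\mu,k}(0)$.

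The main obstacle is Step 2: the multipliers must deliver the coercive quantities with \emph{exactly} the stated powers of $\Im\mu$ and $k$ — the later Rouché argument in Section~\ref{sec_resolvent_estimate} needs $k_m$ independent of the eventual eigenvalue, so these sharp dependences matter — while controlling all boundary terms at $z=0$ despite $\psi(0),A(0)$ being free and absorbing the commutators created by the unbounded coefficient $(\mu-V_s)^2$ against the polynomial weight. It is precisely here that the resolvent bounds of \cite{MR3835243} do not suffice for the triple deck conditions at infinity, and the Prandtl-type estimates of \cite{MR3925144} are needed instead; the $\sqrt k$ loss in \eqref{bound_A_bis} is genuine and only acceptable because it is beaten by other gains in the subsequent finite-$k$ analysis.
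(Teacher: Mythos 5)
Your Step 1 is exactly the paper's reduction: writing $\psi=\bigl(\mu-V_s+\tfrac1{ik}\partial_z^2\bigr)A$ and eliminating $\psi$ gives, in the paper's compact form, $\bigl(\mu-V_s+\tfrac1{ik}\partial_z^2\bigr)^2A'-\bigl[\tfrac1{ik}\partial_z^2,U_s'\bigr]A=F$, which is your expanded equation, with the same boundary relation $\tfrac1{ik}A'''(0)=V_s'(0)A(0)$. The genuine gap is in Step 2, which you yourself identify as the crux but only assert. Testing this equation against ``weighted multipliers built from $\overline A$ and $\overline{A'}$'' cannot produce the claimed coercivity $\gtrsim \Im\mu\,\|A'\|^2$ from the inviscid term: since $\Im\bigl[(\mu-V_s)^2\bigr]=2\,\Im\mu\,(\Re\mu-V_s)$ and $V_s(z)\sim z$ is unbounded, this quantity is \emph{negative} for all large $z$, so the imaginary part of $\ipr{(\mu-V_s)^2A'}{A'}$ has no sign; this is precisely the difficulty, stressed in the paper, that the triple deck conditions at infinity create and that blocks the resolvent estimates of \cite{MR3835243}. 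The paper's actual device is nonlocal: one introduces $\varphi$ solving $\bigl(\overline\mu-V_s-\tfrac1{ik}(\partial_z+\rho'\rho^{-1})^2\bigr)\varphi=A'$, $\varphi(0)=0$, $\varphi(\infty)=0$ (and $\varphi_1$ with datum $A'''$ for \eqref{bound_A_bis}, rather than a test by $\overline{A''}$), so that pairing the squared-operator equation with $\varphi$ collapses one factor and yields $\Im\mu\,\|A'\|^2+\tfrac1k\|A''\|^2$ from $\ipr{(\mu-V_s+\tfrac1{ik}\partial_z^2)A'}{A'}$, the single factor $\mu-V_s$ having imaginary part $\Im\mu>0$ uniformly in $z$. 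The commutator, source and trace terms are then controlled through Lemma~\ref{lem_phi}, which bounds $\varphi,\varphi',\varphi''$ in terms of $A'$, and it is this lemma together with the boundary estimate \eqref{estim_A_4} that generates the precise powers $(\Im\mu)^{-3}$ and $\sqrt k\,(\Im\mu)^{-9/2}$ that the later Rouch\'e argument requires. Your proposal contains neither this auxiliary test function nor any worked substitute, so the heart of the proposition remains unproved.

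A secondary point: your analyticity argument treats the reduced operator as a boundedly invertible analytic family, but because the coefficient grows like $z$ the solution $\psi'$ is not a priori in $L^2(\rho)$ and the operator is not directly amenable to standard resolvent analyticity on the weighted space. The paper handles this by truncating the base flow to $U_s+z\chi_n$, obtaining analyticity of $\mu\mapsto\psi_n(0)$ for each $n$ by classical resolvent theory, and then proving uniform convergence $\psi_n(0)\to\psi(0)$ on compacts via a difference estimate with a modified weight. This part of your plan is repairable along those lines, but it is asserted rather than established; the essential missing ingredient remains the mechanism producing \eqref{bound_A}--\eqref{bound_A_bis}.
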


\mspace
{\em Proof of the well-posedness statement in Proposition \ref{prop_resolvent}}.
We only detail the {\it a priori} estimates leading to the well-posedness. For the detailed construction of solutions in a similar context, see \cite{MR3835243}.  Inspired by our work \cite{MR3925144},  we introduce the solution $A$ of the system
\begin{equation} \label{eq_A_origin}
  \Big(\mu - V_s + \frac{1}{ik} \frac{\dd^2}{\dd z^2} \Big) A
  = \psi, \quad A'\vert_{z=0} = 0, \quad \lim_{z \to \infty} A = 0.
\end{equation}
Through differentiation in $z$, we get
\begin{equation*}
  \Big(\mu - V_s + \frac{1}{ik} \frac{\dd^2}{\dd z^2} \Big) A'  - V'_s A  =
  \psi'.
\end{equation*}
Inserting the two previous identities for $\psi$ and $\psi'$ in
Equation~\eqref{resolventEQ}, we find
$$ \Big(\mu - V_s + \frac{1}{ik} \frac{\dd^2}{\dd z^2} \Big)^2 A' -  \Bigl[  \Big(\mu - V_s + \frac{1}{ik} \frac{\dd^2}{\dd z^2} \Big) , V'_s\Bigl] A = F $$
that is
\begin{equation} \label{eq_A}
\Big(\mu - V_s + \frac{1}{ik} \frac{\dd^2}{\dd z^2} \Big)^2 A' -  \Big[ \frac{1}{ik} \frac{\dd^2}{\dd z^2} , U'_s  \Big] A = F
\end{equation}
 We introduce the solution $\varphi$ of
\begin{equation} \label{eq_phi}
\Bigl( \overline{\mu} - V_s  - \frac{1}{ik} \Bigl(\frac{d}{dz} +  \rho' \rho^{-1}\Bigr)^2 \Bigr) \varphi  = A', \quad \varphi\vert_{z=0} = 0, \quad  \lim_{z \to \infty} \varphi = 0.
\end{equation}
Taking the scalar product   of \eqref{eq_A} with $\varphi$,  we find
\begin{equation*}
  \ipr{\Big(\mu - V_s + \frac{1}{ik} \frac{\dd^2}{\dd z^2}  \Big)
    A'}{A'} - \frac{1}{ik} V'_s(0) A(0) \overline{\varphi'(0)} \rho(0)
  - \ipr{\Big[ \frac{1}{ik} \frac{\dd^2}{\dd z^2} , U'_s  \Big]
    A}{\varphi}  = \ipr{F}{\varphi}.
\end{equation*}
The boundary term at $z=0$  comes from  the integration by parts of  the diffusion term, taking into account that
$$\Big(\mu - V_s + \frac{1}{ik} \frac{\dd^2}{\dd z^2}\Big)A'(0) = \psi'(0) + V'_s(0) A(0) =  V'_s(0) A(0)  $$
We perform one more integration by parts and take the imaginary part to find
\begin{equation} \label{estim_A}
\begin{aligned}
 \Im \mu \|A'\|^2  + \frac{1}{k} \|A''\|^2  \: = \:  & \Im  \frac{1}{ik} \ipr{A''}{\rho' \rho^{-1} A'}
 - \Im  \frac{1}{ik}  V'_s(0)  A(0) \overline{\varphi'(0)} \rho(0)  \\
 & + \Im  \ipr{\Big[ \frac{1}{ik} \frac{\dd^2}{\dd z^2} , U'_s  \Big]  A}{\varphi} + \Im \ipr{F}{\varphi}.
 \end{aligned}
\end{equation}
 It remains to estimate the four terms at the right-hand side. Clearly,
\begin{equation} \label{estim_A_1}
  \Im  \frac{1}{ik}   \ipr{A''}{\rho' \rho^{-1} A'} \le \frac{1}{k}\|\rho' \rho^{-1}\|_\infty \|A''\| \, \| A'\| \le \frac{1}{2k} \|A''\|^2 + \frac{1}{2k} \|\rho' \rho^{-1}\|_\infty^2  \|A'\|^2.
\end{equation}
To bound the last terms, we first need to relate norms of $\varphi$ to
norms of $A$. We claim:
\begin{lemma} \label{lem_phi}
  For $k$ large enough (depending on $\mu_m$ and $\rho$), the solution
  $\varphi$ of \eqref{eq_phi} satisfies
  \begin{align*}
    & \|\varphi\|   \le \frac{\sqrt{2}}{\Im \mu} \|A'\|, \quad  \|\varphi\|_{L^2}  \le \frac{\sqrt{2}}{\Im \mu} \|A'\|_{L^2}, \\
    & \|\varphi'\|  \le \frac{\sqrt{k}}{\sqrt{\Im \mu}} \|A'\|, \quad \|\varphi'\|_{L^2}  \le \frac{\sqrt{k}}{\sqrt{\Im \mu}} \|A'\|_{L^2}, \quad  \|\varphi''\|_{L^2}  \le C k \|A'\|_{L^2},
  \end{align*}
  where $C$ depends also on $\mu_m$.
\end{lemma}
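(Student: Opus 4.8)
The plan is to run a weighted $L^2$ energy estimate on the viscous resolvent equation \eqref{eq_phi}, the crucial structural remark being that the twisting by $\rho'/\rho$ is exactly the conjugation of the flat Laplacian by $\rho$:
\[
  \Bigl(\pa_z + \tfrac{\rho'}{\rho}\Bigr)^{2}\varphi = \tfrac{1}{\rho}\,\pa_z^2(\rho\varphi),
  \qquad\text{so in particular}\qquad
  \tfrac{1}{\rho}\,\pa_z(\rho\varphi) = \Bigl(\pa_z + \tfrac{\rho'}{\rho}\Bigr)\varphi .
\]
Concretely I would pair \eqref{eq_phi} with $\varphi$, once in $L^2(\rho)$ and once in plain $L^2$; in both cases, after one integration by parts, the twisted second-order term produces a coercive dissipation plus genuinely lower-order remainders. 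The only facts about $\rho = 1+z^m$ that enter are that $\rho'/\rho$, $(\rho'/\rho)'$ and $(\rho'/\rho)^2$ are bounded, and that, $m$ being large, $\|(1+z)\varphi\|\lesssim\|\varphi\|$ and $\int_0^\infty\rho^{-1}\,\dd z<\infty$.

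For the bounds on $\|\varphi\|$ and $\|\varphi'\|$: since $-\tfrac{1}{ik}=\tfrac{i}{k}$ and $V_s$ is real, pairing \eqref{eq_phi} with $\varphi$ in $L^2(\rho)$ and taking imaginary parts gives --- the integration by parts carrying no boundary term since $\varphi(0)=0$ ---
\[
  \Im\mu\,\|\varphi\|^2 + \tfrac{1}{k}\bigl\|(\pa_z+\tfrac{\rho'}{\rho})\varphi\bigr\|^2
  = \tfrac{1}{k}\Re\ipr{(\pa_z+\tfrac{\rho'}{\rho})\varphi}{\tfrac{\rho'}{\rho}\varphi} - \Im\ipr{A'}{\varphi}.
\]
Both terms on the left are non-negative. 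For $k\ge k_m$, with $k_m$ depending only on $\mu_m$ and $\|\rho'/\rho\|_\infty$ (recall $\Im\mu\ge\mu_m$), the first term on the right is absorbed by $\tfrac{1}{2k}\|(\pa_z+\tfrac{\rho'}{\rho})\varphi\|^2+\tfrac{\Im\mu}{4}\|\varphi\|^2$, while Cauchy--Schwarz and Young bound $-\Im\ipr{A'}{\varphi}$ by $\tfrac{\Im\mu}{4}\|\varphi\|^2+\tfrac{1}{\Im\mu}\|A'\|^2$. This yields $\|\varphi\|\lesssim\tfrac{1}{\Im\mu}\|A'\|$ and $\|(\pa_z+\tfrac{\rho'}{\rho})\varphi\|\lesssim\sqrt{k/\Im\mu}\,\|A'\|$, hence $\|\varphi'\|=\|(\pa_z+\tfrac{\rho'}{\rho})\varphi-\tfrac{\rho'}{\rho}\varphi\|\lesssim\sqrt{k/\Im\mu}\,\|A'\|$ for $k$ large; keeping the lower-order terms with care for $k$ large produces the explicit constants of the statement. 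The identical computation in plain $L^2$ --- the same identity again exhibiting $\tfrac{1}{k}\|(\pa_z+\tfrac{\rho'}{\rho})\varphi\|_{L^2}^2$ as the coercive term, up to one further absorbable remainder --- gives the unweighted bounds.

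For $\|\varphi''\|_{L^2}$ I would simply isolate the viscous term in \eqref{eq_phi}, $\tfrac{1}{ik}(\pa_z+\tfrac{\rho'}{\rho})^{2}\varphi = (\overline\mu-V_s)\varphi - A'$, so that $\|(\pa_z+\tfrac{\rho'}{\rho})^{2}\varphi\|_{L^2}=k\,\|(\overline\mu-V_s)\varphi-A'\|_{L^2}\le k\bigl(\|A'\|_{L^2}+|\mu|\,\|\varphi\|_{L^2}+\|V_s\varphi\|_{L^2}\bigr)$; the only delicate term $\|V_s\varphi\|_{L^2}$ is controlled by $\|(1+z)\varphi\|_{L^2}\lesssim\|\varphi\|$, which is already bounded. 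Expanding $(\pa_z+\tfrac{\rho'}{\rho})^{2}=\pa_z^2+2\tfrac{\rho'}{\rho}\pa_z+\bigl((\tfrac{\rho'}{\rho})'+(\tfrac{\rho'}{\rho})^2\bigr)$, whose coefficients are bounded, and inserting the bounds just obtained for $\varphi$ and $\varphi'$, gives $\|\varphi''\|_{L^2}\lesssim k\,\|A'\|$. Existence and uniqueness of $\varphi$ follow from these \emph{a priori} bounds by a routine argument (as in \cite{MR3835243,MR3925144}), and analyticity of $\mu\mapsto\varphi$ --- hence of $\mu\mapsto\varphi'(0)$, which is what is needed afterwards --- follows from the analytic dependence of \eqref{eq_phi} on $\mu$ and the local uniformity of the estimates. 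The one genuine difficulty is bookkeeping: one has to check that the twisting weight produces \emph{exactly} the coercive term $\tfrac{1}{k}\|(\pa_z+\tfrac{\rho'}{\rho})\varphi\|^2$ plus honestly lower-order, absorbable remainders, uniformly in $\mu$ with $\Im\mu\ge\mu_m$ (so that $k_m$ depends on $\mu_m$ alone), and to trade the polynomial weight $\rho$ against the linear growth of $V_s$ in the second-order estimate.
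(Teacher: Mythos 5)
For the first four bounds (on $\|\varphi\|$, $\|\varphi\|_{L^2}$, $\|\varphi'\|$, $\|\varphi'\|_{L^2}$) your argument is essentially the paper's: pair \eqref{eq_phi} with $\varphi$ in $L^2(\rho)$ and in plain $L^2$, take imaginary parts, and absorb the weight-induced cross term for $k$ large, with a threshold depending only on $\mu_m$ and $\|\rho'\rho^{-1}\|_\infty$. Whether the dissipation is recorded as $\frac1k\|\varphi'\|^2$ with remainder $\Im\frac{1}{ik}\ipr{\rho'\rho^{-1}\varphi}{\varphi'}$ (the paper) or as $\frac1k\|(\pa_z+\rho'\rho^{-1})\varphi\|^2$ with the corresponding remainder (you) is immaterial; in your arrangement an extra $\|\rho'\rho^{-1}\varphi\|$ appears when you return to $\|\varphi'\|$, so the precise constants of the statement require the small adjustment you allude to, which is harmless.

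The genuine problem is your proof of $\|\varphi''\|_{L^2}\le C k\|A'\|_{L^2}$. Isolating the viscous term and taking $L^2$ norms produces the contribution $k\,|\mu|\,\|\varphi\|_{L^2}\le \sqrt2\,k\,\frac{|\mu|}{\Im\mu}\|A'\|_{L^2}$, so your constant degenerates like $|\mu|/\Im\mu$. The lemma claims $C=C(\mu_m)$ uniformly on the \emph{unbounded} region $\{\Im\mu\ge\mu_m\}$, and this uniformity is what is quoted afterwards: the trace estimate \eqref{estim_A_4} and its $\varphi_1$ analogue feed the bound into \eqref{bound_A}--\eqref{bound_A_bis} of Proposition~\ref{prop_resolvent}, whose constants are again claimed to depend only on $\mu_m$ and $\Im\mu$, and the analyticity/approximation step explicitly uses bounds uniform in $\mu$ inside $\{\Im\mu>\mu_m\}$. (Only the final Rouch\'e argument takes place in a bounded disk, so your weaker bound could not simply be substituted without reworking these statements.) The paper avoids the factor $|\mu|$ at no cost by performing a \emph{second energy estimate}: pair \eqref{eq_phi} with $\varphi''$ in plain $L^2$ and integrate by parts, so that the term $\ip{(\overline{\mu}-V_s)\varphi}{\varphi''}$ yields the good-sign contribution $\Im\mu\,\|\varphi'\|_{L^2}^2$ plus only $\Im\ip{V_s'\varphi}{\varphi'}$ with $V_s'$ bounded; absorbing the remaining terms into $\frac{1}{4k}\|\varphi''\|_{L^2}^2$ gives $\frac1{4k}\|\varphi''\|_{L^2}^2\le (C+k)\|A'\|_{L^2}^2$ with $C=C(\mu_m)$, hence the stated bound. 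Two minor further points: your chain $\|V_s\varphi\|_{L^2}\lesssim\|\varphi\|\le\frac{\sqrt2}{\Im\mu}\|A'\|$ leaves the \emph{weighted} norm of $A'$ on the right, not the $\|A'\|_{L^2}$ of the statement (harmless downstream, but not the claimed inequality); and the inequality ``$\|(1+z)\varphi\|\lesssim\|\varphi\|$'' is only valid read as unweighted norm on the left against $L^2(\rho)$ on the right, as in your actual usage — between two $L^2(\rho)$ norms it is false.
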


\mspace
We take the $L^2(\rho)$ scalar product of \eqref{eq_phi}  with $\varphi$, and retain the imaginary part:
\begin{align*}
\Im \mu \|\varphi\|^2 + \frac{1}{k} \|\varphi'\|^2 & = \Im \frac{1}{ik} \ipr{\rho' \rho^{-1} \varphi}{\varphi'}   - \Im \ipr{A'}{\varphi} \\
& \le \frac{1}{2k}\|\rho' \rho^{-1}\|^2_\infty \|\varphi\|^2  +   \frac{1}{2k} \|\varphi'\|^2 + \frac{1}{2 \Im \mu}\|A'\|^2 + \frac{\Im \mu}{2} \|\varphi\|^2
\end{align*}
For $ \frac{1}{2k}\|\rho' \rho^{-1}\|^2_\infty \le \frac{\mu_m}{4}$, we get in particular
$$ \frac{\Im \mu}{4} \|\varphi\|^2 \le \frac{1}{2 \Im \mu}\|A'\|^2 $$
which implies the first estimate. We also find
$$ \frac{1}{2k} \|\varphi'\|^2 \le   \frac{1}{2 \Im \mu}\|A'\|^2  $$
which implies the second estimate. Similar (and even simpler) calculations yield
$$ \|\varphi\|_{L^2}   \le \frac{\sqrt{2}}{\Im \mu} \|A'\|_{L^2}, \quad  \|\varphi'\|_{L^2}  \le \frac{\sqrt{k}}{\sqrt{\Im \mu}} \|A'\|_{L^2}. $$
 To obtain the last inequality of the lemma, we multiply by  $\varphi''$ and integrate. Denoting $\ip{\,}{}$ the classical $L^2$ scalar product, we get
\begin{align*}
  \frac{1}{k} \|\varphi''\|^2_{L^2} + \Im \mu \| \varphi'\|_{L^2}^2
  & = \Im  \frac{1}{ik}  \ip{\big( 2  \big(\rho' \rho^{-1}\big)' \varphi' + \big(\rho' \rho^{-1}\big)''  \varphi}{\varphi''}   + \Im \ip{V'_s  \varphi}{\varphi'} + \Im \ip{A'}{\varphi''} \\
  & \le  \frac{1}{4k} \|\varphi''\|_{L^2}^2  +  \frac{1}{k} \| 2 \big(\rho' \rho^{-1}\big)' \|^2_{\infty}  \|\varphi'\|_{L^2}^2 +   \frac{1}{4k} \|\varphi''\|_{L^2}^2  +  \frac{1}{k}   \|  \big(\rho' \rho^{-1}\big)'' \|^2_{\infty}   \|\varphi\|_{L^2}^2 \\
  &\quad +   \frac{\Im \mu}{4}  \|\varphi'\|_{L^2}^2  +  \frac{1}{\Im \mu} \|V'_s\|_{\infty}^2  \|\varphi\|_{L^2}^2  \\
  &\quad +  \frac{1}{4k} \|\varphi''\|_{L^2}^2 +  k \|A'\|_{L^2}^2.
\end{align*}
Using the previous estimate for $\|\varphi\|_{L^2}$, we deduce that for $k$ large enough
\begin{align*}
 \frac{1}{4k} \|\varphi''\|_{L^2}^2  + \frac{\Im \mu}{4} \| \varphi'\|_{L^2}^2   \le \big(C + k \big)  \|A'\|_{L^2}^2
\end{align*}
for a constant $C$ depending on $\mu_m$. The last estimate of the lemma follows.

\mspace
We now go back to the identity \eqref{estim_A}, where we have to bound the last three terms at the right-hand side. The easiest is
\begin{equation} \label{estim_A_2}
 \Im \ipr{F}{\varphi}  \le \frac{1}{(\Im\mu)^3} \|F\|^2  + \frac{(\Im \mu)^3}{4} \|\varphi\|^2 \le \frac{1}{(\Im\mu)^3} \|F\|^2 + \frac{\Im \mu}{2} \|A'\|^2.
 \end{equation}
The commutator term splits into
\begin{align*}
  \Im  \ipr{\Big[ \frac{1}{ik} \frac{\dd^2}{\dd z^2} , U'_s  \Big]  A}{\varphi} & =  \Im   \ipr{\frac{2}{ik} U''_s A'}{\varphi}  + \Im   \ipr{\frac{1}{ik} U'''_s A}{\varphi} \\
                                                                                & \le \frac{2}{k} \|U''_s\|_\infty \|A'\| \, \|\varphi\|  +   \frac{C_H}{k} \|U'''_s(1+z)\|_\infty \|A'\| \, \|\varphi\|,
\end{align*}
where we used the Hardy inequality $\|\frac{A}{1+y}\| \le C_H
\|A'\|$. Using the first estimate in the lemma, we end up with
\begin{equation} \label{estim_A_3}
 \Im  \ipr{\Big[ \frac{1}{ik} \frac{\dd^2}{\dd z^2} , U'_s  \Big]  A}{\varphi}  \le \frac{C}{(\Im \mu) k} \|A'\|^2.
\end{equation}
Eventually, for the trace term, we have through Sobolev imbedding:
\begin{equation} \label{estim_A_4}
  \begin{aligned}
    \Im  \frac{1}{ik} (U_s'(0) + 1) A(0) \overline{\varphi'(0)} \rho(0) & \le \frac{C}{k} \|A\|_\infty \|\varphi'\|^{1/2}_{L^2} \|\varphi''\|_{L^2}^{1/2} \\
    & \le \frac{C}{k} \|\rho^{-1/2}\|_{L^2}  \|A'\| \, \|\varphi'\|^{1/2}_{L^2} \,  \|\varphi''\|_{L^2}^{1/2} \\
    & \le \frac{C'}{k^{1/4} (\Im \mu)^{1/4}}   \|A'\| \, \|A'\|_{L^2}
  \end{aligned}
\end{equation}
for constants $C,C'$ depending on $\mu_m$.

\mspace
Collecting \eqref{estim_A_1}-\eqref{estim_A_2}-\eqref{estim_A_3}, we find that for $k$ large enough, depending on $\mu_m$,
\begin{equation*}
\Im \mu \|A'\|^2 + \frac{1}{k} \|A''\|^2 \le \frac{4}{(\Im \mu)^3} \|F\|^2
\end{equation*}
that is \eqref{bound_A}.

\mspace
To establish estimate \eqref{bound_A_bis}, we introduce the solution $\varphi_1$ of
\begin{equation} \label{eq_phi_1}
\Bigl( \overline{\mu} - V_s - \frac{1}{ik} \frac{\dd^2}{\dd z^2} \Big)\varphi_1  = A''', \quad \varphi_1\vert_{y=0} = 0, \quad  \lim_{y \to \infty} \varphi_1 = 0.
\end{equation}
Taking the usual  $L^2$ scalar product of \eqref{eq_A} with $\varphi_1$, we obtain
\begin{equation*}
  \ip{\Big(\mu - V_s+ \frac{1}{ik} \frac{\dd^2}{\dd z^2} \Big) A'}{A'''}
  + \frac{1}{ik} V_s'(0) A(0) \overline{\varphi'_1(0)}   -
  \ip{\Big[ \frac{1}{ik} \frac{\dd^2}{\dd z^2} , U'_s  \Big] A}{\varphi_1}
  = \ip{F}{\varphi_1}.
\end{equation*}
Hence,
\begin{align*}
\Im\mu \|A''\|_{L^2}^2  + \frac{1}{k}  \|A'''\|_{L^2}^2 & = \Im \ip{V'_s  A'}{A''}  \\
& + \Im  \frac{1}{ik} V_s'(0)  A(0) \overline{\varphi'_1(0)} \rho(0) \\
&  +\Im  \ip{\Big[ \frac{1}{ik} \frac{\dd^2}{\dd z^2} , U'_s  \Big]  A}{\varphi_1} - \Im  \ip{F}{\varphi_1}
\end{align*}
The last three terms can be treated like before, replacing $\varphi$ by $\varphi_1$. First, the estimates in Lemma  \ref{lem_phi} are replaced by
\begin{align*}
&  \|\varphi_1\|_{L^2}   \le \frac{\sqrt{2}}{\Im \mu} \|A'''\|_{L^2}, \quad   \|\varphi_1'\|_{L^2}  \le \frac{\sqrt{k}}{\sqrt{\Im \mu}} \|A'''\|_{L^2}, \quad \|\varphi_1''\|_{L^2}  \le C k \|A'''\|_{L^2},
\end{align*}
with the same proof. Then, estimate \eqref{estim_A_3} becomes
\begin{equation*}
 \Im  \ip{\Big[ \frac{1}{ik} \frac{\dd^2}{\dd z^2} , U'_s  \Big] A}{\varphi_1}
 \le \frac{C}{(\Im \mu) k} \|A'\| \, \|A'''\|_{L^2}
 \le \frac{1}{4k}  \|A'''\|_{L^2}^2  + \frac{C^2}{(\Im \mu)^2 k} \|A'\|^2.
\end{equation*}
Proceeding as in \eqref{estim_A_4}, we find
\begin{equation*}
  \Im  \frac{1}{ik} V_s'(0) A(0)
  \overline{\varphi'_1(0)}  \le  \frac{C}{k^{1/4}(\Im
    \mu)^{1/4}}\|A'\|  \, \|A'''\|_{L^2} \le \frac{1}{4k}
  \|A'''\|_{L^2}^2 +  \frac{C^2 \sqrt{k}}{\sqrt{\Im \mu}} \|A'\|^2.
\end{equation*}
Also,
$$ - \Im  \ip{F}{\varphi_1} \le \|F\|_{L^2} \, \|\varphi_1\|_{L^2} \le  \|F\|_{L^2} \frac{\sqrt{2}}{\Im \mu} \|A'''\|_{L^2} \le \frac{1}{4k} \|A'''\|^2_{L^2} + \frac{2}{(\Im \mu)^2} \|F\|_{L^2}^2 $$
The remaining term is controlled by
$$  \Im \ip{V'_s  A'}{A''} \le C \|A'\|_{L^2} \, \|A''\|_{L^2} \le  \frac{C^2}{2 (\Im \mu)} \|A'\|^2 \, + \frac{\Im \mu}{2} \|A''\|_{L^2} $$
Collecting these bounds, we end up with
\begin{equation*}
  \frac{\Im\mu}{2} \|A''\|_{L^2}^2  + \frac{1}{4k}  \|A'''\|_{L^2}^2
  \le  C \Big(  \frac{\sqrt{k}}{\sqrt{\Im \mu}} \|A'\|^2
  +  \frac{1}{\Im \mu}\|A'\|^2   + \frac{2}{(\Im \mu)^2} \|F\|^2_{L^2} \Big)
\end{equation*}
From this bound and  \eqref{bound_A}, we deduce \eqref{bound_A_bis}
\begin{equation*}
  \Im\mu \|A''\|^2_{L^2}  + \frac{1}{k}  \|A'''\|_{L^2}^2
  \le  C  \Big( \frac{\sqrt{k}}{(\Im \mu)^{9/2}}  + \frac{1}{(\Im \mu)^2} \Big)  \|F\|^2
\end{equation*}
for some constant $C$ depending on $\mu_m$.

 \mspace
 {\em Proof of the analyticity statement in Proposition \ref{prop_resolvent}}.
 The last thing to be shown is the analyticity of the map $\mu \rightarrow \psi_{\mu,k}(0)$. As before, we write $\psi = \psi_{\mu,k}$. We proceed as follows: let $\chi_n(z) = \chi\big(\frac{z}{n}\big)$, for some smooth non-negative $\chi$ which is one near the origin and zero in the large. We consider the approximate problem
\begin{equation} \label{resolventEQn}
  \left\{
  \begin{aligned}
  &(\mu-(U_s+ z \chi_n)) \psi' + (U_s' + (z \chi_n)') \psi + \frac{1}{ik} \psi'''
  = F \\
    &\psi'|_{z=0}=0, \lim_{z\to\infty} \psi'(z)=0, \lim_{z\to\infty} \psi = 0.
  \end{aligned}
  \right.
\end{equation}
Above calculations apply to to this approximate system as well, and yield a solution
$$\psi_n  =  \Big(\mu - (U_s + z \chi_n) + \frac{1}{ik} \frac{\dd^2}{\dd z^2} \Big) A_n,   \quad A_n'\vert_{z=0} = 0, \quad \lim_{z \to \infty} A_n = 0 $$
 where $A_n$ satisfies the same estimates \eqref{bound_A}-\eqref{bound_A_bis} uniformly in $n$. In particular, $A'_n \in L^2(\rho)$. The difference with the original  system  is that this implies  $\psi'_n \in L^2(\rho)$:  it can be deduced from  the formula
  $$ \psi'_n =  \Big(\mu - (U_s + z \chi_n) + \frac{1}{ik} \frac{\dd^2}{\dd z^2} \Big) A'_n  - (U'_s+(z \chi_n)') A_n $$
 as the base flow  $U_s + z \chi_n$ is not diverging at infinity. Using the bounds for $A_n$, one obtains an estimate of the type
 $\|\psi'_n\| \le  C_n \|F\|$, with a bound that may depend on $n$ through $\chi_n$, but is uniform in $\mu$ inside  $\{ \Im \mu > \mu_m\}$. Using further the equation in \eqref{resolventEQn}, we find that $\psi'_n \in H^2(\rho)$, and a bound of the form $\|\psi'_n\|_{H^2(\rho)} \le C_n \|F\|$. Introducing the operator
 \begin{equation*}
   \mathcal{L}_n : H^2(\rho) \cap H^1_0(\rho)
  \to L^2(\rho), \quad u \mapsto - (U_s+ z \chi_n) u -
  (U_s' + (z \chi_n)') \int_{z}^{\infty} u + \frac{1}{ik}  u''
\end{equation*}
we then know that its resolvent $(\mu + \mathcal{L}_n)^{-1}$  is well-defined for $\Im \mu >  \mu_m$, and as any resolvent operator is analytic in $\mu$. Hence,
$\psi'_n = (\mu + \mathcal{L}_n)^{-1} F$ is analytic in $\mu$ with values in $H^2(\rho)$, and by imbedding $\mu \rightarrow \psi_n(0)$ is analytic as well.

\mspace
To conclude that $\mu \rightarrow \psi(0)$ is analytic, it remains to show that $\psi_n(0) \xrightarrow[n \rightarrow +\infty]{} \psi(0)$ uniformly on the compact sets of $\{ \Im \mu > \mu_m\}$.  Note that, by \eqref{eq_A_origin}, we have
\begin{align*}
  |\psi_n(0) - \psi(0)|
  & = |\mu (A_n(0) - A(0))  + \frac{1}{ik} \pa^2_z (A_n - A)(0)| \\
  & \le C \Big( \|A'_n - A'\|_{L^2(\tilde{\rho})} + \|A''_n - A''\|_{L^2(\tilde \rho)} + \|A_n''' - A'''\|_{L^2}\Big)
\end{align*}
for all $\mu$ in a compact set $K$, for any weight function $\tilde \rho \ge 1$ such that $\frac{1}{\tilde \rho} \in L^1(\R_+)$, where the  constant $C$ depends on $K$ and $k$. The last step is to establish that the right-hand side goes to zero (uniformly in $\mu \in K$), which can be done through an estimate of the difference.
Namely, combining \eqref{eq_A} and its analogue
$$ \Big(\mu - (U_s + z\chi_n ) + \frac{1}{ik} \frac{\dd^2}{\dd z^2} \Big)^2 A_n' -  \Big[ \frac{1}{ik} \frac{\dd^2}{\dd z^2} , U'_s + (z \chi_n)'  \Big] A_n = F $$
we see that
$$ \Big(\mu - V_s + \frac{1}{ik} \frac{\dd^2}{\dd z^2} \Big)^2 (A_n - A)' -  \Big[ \frac{1}{ik} \frac{\dd^2}{\dd z^2} , U'_s  \Big] (A_n - A) = R_n$$
where, denoting $\psi_n = z (\chi_n -1)$:
\begin{align*}
R_n & = \Big[ \frac{1}{ik} \frac{\dd^2}{\dd z^2} ,  \psi_n' \Big] A'_n \\
& -  \Big(\mu - V_s + \frac{1}{ik} \frac{\dd^2}{\dd z^2} \Big) (\psi_n A_n)  -  \psi_n  \Big(\mu - V_s + \frac{1}{ik} \frac{\dd^2}{\dd z^2} \Big)  A_n \\
& - \psi_n^2 A_n.
\end{align*}
Applying \eqref{bound_A} and \eqref{bound_A_bis} with weight
$\tilde \rho = (1+z)^{-8} \rho$ instead of $\rho$, we find that
\begin{equation*}
  \|A'_n - A'\|_{L^2(\tilde{\rho})}
  + \|A''_n - A''\|_{L^2(\tilde \rho)} + \|A_n''' - A'''\|_{L^2} \le C
  \|R_n\|_{L^2(\tilde \rho)}
\end{equation*}
for a constant $C$ that again may depend on $K$ or $k$. Eventually, using that $A'_n$ is bounded uniformly in $n$ in $H^1(\rho)$, one can check that $\|R_n\|_{L^2(\tilde \rho)}$ goes to zero as $n \to \infty$. For instance,
\begin{align*}
  \left\|\Big[ \frac{1}{ik} \frac{\dd^2}{\dd z^2} ,  \psi_n' \Big]
  A'_n\right\|_{L^2(\tilde \rho)}
  &\lesssim \|\psi'''_n (1+z)^{-2}\|_\infty \|A'_n\|_{L^2(\rho)} +   \|\psi''_n (1+z)^{-2}\|_\infty
 \|A''_n\|_{L^2(\rho)} \\
  &\lesssim \|\psi'''_n (1+z)^{-4}\|_\infty
    +  \|\psi''_n (1+z)^{-4}\|_\infty
    \lesssim  \frac{1}{n}.
\end{align*}
All other terms defining $R_n$ can be treated with similar ideas.

\subsection{Conclusion}
We remind that our goal is to find solutions $(\mu_k, \phi_k)$ of
\eqref{OS_LTD} with $\liminf_{k \rightarrow +\infty} \mu_k > 0$.  From
the analysis of Section~\ref{sec:k:infinite}, notably
Proposition~\ref{main_prop}, we already know that the system
\eqref{OS_LTD_inv} has a solution $(\mu_\infty, \phi_\infty)$ with
$\Im \mu_\infty > 0$. We shall find $\mu_k, \phi_k$ with $\mu_k$ close
to $\mu_\infty$. We fix $\mu_m = \frac{1}{2} \Im \mu_\infty$.

\mspace We shall first prove that for any $\mu$ with
$\Im \mu \ge \mu_m$ and $k \ge k_m$, with $k_m$ given by Proposition
\ref{prop_resolvent}, one can construct a solution
$\phi = \phi_{\mu,k}$ of
\begin{equation} \label{OS_mu}
  \begin{aligned}
    (\mu - V_s) \phi'  + V'_s \phi + \frac{1}{ik} \phi^{(3)}
    &  = -1 + \frac{1}{k} \big(\mu -  U_s +  z U_s'\big),\\
    \phi'|_{z=0} & = \frac{1}{k},  \quad  \lim_{z\to\infty} \phi'(z)  = 0.
  \end{aligned}
\end{equation}
The first equation implies the compatibility condition $\phi(\infty) = -1 +  \frac{1}{k}  \big(\mu -  A_s)$.  We  look for an approximation of $\phi$  under the form
 $$ \phi_{app} = \phi_{\mu,\infty} + \phi_{bl} $$
 where $\phi_{\mu,\infty}$ solves \eqref{OS_mu_inv} and where
 $\phi_{bl} = \phi_{bl}(z)$ is a boundary layer term that allows to
 recover the right Neumann boundary condition at $z=0$ and the right
 Dirichlet condition at infinity.  Namely, $\phi_{bl}$ is the solution
 of
\begin{equation*}
 \mu \phi'_{bl} + \frac{1}{ik} \phi_{bl}^{(3)} = 0,  \quad
 \phi_{bl}'(0)  = - \phi'_{\mu,\infty}(0) + \frac{1}{k},    \quad
 \phi'_{bl}(\infty)  = 0, \quad  \phi_{bl}(\infty) =  \frac{1}{k}
 \big(\mu -  A_s).
\end{equation*}
The solution is explicitly given by
\begin{equation} \label{def_phi_bl}
  \phi_{bl}(z) = \frac{1}{k}  \big(\mu -  A_s \big)
  + \int_{\infty}^z e^{-\sqrt{-ik\mu}\, y} \dd y
  \, \Big( - \phi_{\mu,\infty}'(0) + \frac{1}{k} \Big).
\end{equation}
Thanks to this choice, it is straightforward to check that the
difference $\psi = \phi - \phi_{app}$ satisfies a system of type
\eqref{resolventEQ} with
\begin{equation} \label{def:F}
 F :=  V_s \phi_{bl}' - V'_s \phi_{bl} - \frac{1}{ik} \phi^{(3)}_{\mu,\infty}  + \frac{1}{k} \big(\mu -  U_s +  z U_s'\big).
\end{equation}
By Proposition~\ref{prop_resolvent}, there exists a solution $\psi$ to
this system, so that
\begin{equation*}
  \phi_{\mu,k} = \phi_{app} + \psi = \phi_{\mu,\infty} + \phi_{bl} + \psi
\end{equation*}
defines  a solution of \eqref{OS_mu}.   Defining
\begin{equation*}
  \Phi_{k}(\mu) := \phi_{k,\mu}(0).
\end{equation*}
We shall prove that for $k$ large enough, there exists $\mu_k$ in the
disk
\(D(\mu_\infty, \frac{1}{2} \Im \mu_\infty) :=
\{ {z \in \C}:
|\mu_\infty-z| < \frac{1}{2} \Im \mu_\infty\}\) such that
$\Phi_{k}(\mu_k) = 0$. Hence, $(\mu_k, \phi_k := \phi_{\mu_k,k})$ will
be the desired solution to \eqref{OS_LTD}. More precisely, we state
\begin{proposition} \label{prop_Rouche}
  There exist constants $C,K > 0$ such that \(\Phi_k\) is holomorphic
  in \(D(\mu_\infty, \frac{1}{2} \Im \mu_\infty)\) and
  \begin{equation*}
    |\Phi_{k}(\mu) - \Phi_\infty(\mu)| = |\phi_{\mu,k}(0) -
    \phi_{\mu,\infty}(0)| \le C k^{-1/4},
    \qquad
    \forall \mu \in D(\mu_\infty, \frac{1}{2} \Im \mu_\infty), k \ge K.
  \end{equation*}
\end{proposition}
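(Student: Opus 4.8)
The plan is to build on the decomposition $\phi_{\mu,k}=\phi_{\mu,\infty}+\phi_{bl}+\psi$ already set up, which gives
\[
  \Phi_k(\mu)-\Phi_\infty(\mu)=\phi_{\mu,k}(0)-\phi_{\mu,\infty}(0)=\phi_{bl}(0)+\psi(0),
\]
and then to bound each of $\phi_{bl}(0)$ and $\psi(0)$ by $Ck^{-1/4}$, uniformly in $\mu$ on the disk. One checks first that every $\mu\in D(\mu_\infty,\tfrac12\Im\mu_\infty)$ obeys $\Im\mu>\tfrac12\Im\mu_\infty=\mu_m$ and $|\mu|\le|\mu_\infty|+\tfrac12\Im\mu_\infty$; hence Proposition~\ref{prop_resolvent} applies on the whole disk for $k\ge k_m$, the factors $(\Im\mu)^{-\alpha}$ in \eqref{bound_A}--\eqref{bound_A_bis} are $\le\mu_m^{-\alpha}$ there, and (using also the fast decay of $U_s$ and that $|\mu-V_s(y)|\gtrsim 1+y$ uniformly) all estimates below are uniform on the disk.

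The term $\phi_{bl}(0)$ is read off from \eqref{def_phi_bl}: since $\Re(-ik\mu)=k\,\Im\mu>0$ the principal branch of $\sqrt{-ik\mu}$ is legitimate and $\int_\infty^0 e^{-\sqrt{-ik\mu}\,y}\,\dd y=-(-ik\mu)^{-1/2}$ with modulus $(k|\mu|)^{-1/2}\le(k\mu_m)^{-1/2}$, while $\phi_{\mu,\infty}'(0)=-V_s'(0)\int_0^\infty(\mu-V_s)^{-2}\,\dd y-\mu^{-1}$ is bounded on the disk; so $|\phi_{bl}(0)|\le Ck^{-1/2}$. The real work is the bound $\|F\|\le Ck^{-1/4}$ for the forcing $F$ of \eqref{def:F}. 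The delicate point is that two of its pieces, $-V_s'\phi_{bl}$ and $\tfrac1k(\mu-U_s)$, are separately \emph{not} in $L^2(\rho)$ --- as $z\to\infty$ they tend to $-\tfrac1k(\mu-A_s)$ and $\tfrac1k(\mu-A_s)$ respectively, using $\phi_{bl}(\infty)=\tfrac1k(\mu-A_s)$, $U_s\to A_s$, $V_s'\to1$ --- but their sum decays, which is exactly what the choice $\phi_{bl}(\infty)=\tfrac1k(\mu-A_s)$ was made for. Writing $\phi_{bl}=\tfrac1k(\mu-A_s)+g$, $g(z):=-(-ik\mu)^{-1/2}e^{-\sqrt{-ik\mu}\,z}\big(\tfrac1k-\phi_{\mu,\infty}'(0)\big)$, one gets $\tfrac1k(\mu-U_s)-V_s'\phi_{bl}=-\tfrac1k(U_s-A_s)-\tfrac1k U_s'(\mu-A_s)-(1+U_s')g$, hence
\[
  F=V_s\phi_{bl}'-\tfrac1k(U_s-A_s)-\tfrac1k U_s'(\mu-A_s)-(1+U_s')g-\tfrac1{ik}\phi_{\mu,\infty}^{(3)}+\tfrac1k\,zU_s'.
\]
The pieces built on $U_s-A_s$, $U_s'$, $zU_s'$ and $\phi_{\mu,\infty}^{(3)}$ decay faster than any polynomial --- for $\phi_{\mu,\infty}^{(3)}$ this follows from $\phi_{\mu,\infty}+1=-(\mu-V_s)\int_z^\infty U_s'(\mu-V_s)^{-2}\,\dd y$ and the fast decay of $U_s$ and its derivatives --- and contribute $\OO(k^{-1})$ to $\|F\|$; the two boundary-layer pieces $V_s\phi_{bl}'$ and $(1+U_s')g$ carry $e^{-\sqrt{-ik\mu}\,z}$, whose $z$-decay rate is $\ge\sqrt{k\mu_m}$, so that, after the polynomial prefactors are weighed against $\rho$, their $L^2(\rho)$ norm is $\OO(k^{-1/4})$ (in fact $\OO(k^{-3/4})$). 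Thus $\|F\|\le Ck^{-1/4}$ on the disk.

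The estimate on $\psi(0)$ is then closed through Proposition~\ref{prop_resolvent}. Writing $\psi=\big(\mu-V_s+\tfrac1{ik}\tfrac{\dd^2}{\dd z^2}\big)A$ we have $\psi(0)=\mu A(0)+\tfrac1{ik}A''(0)$. By Cauchy--Schwarz $|A(0)|\le\int_0^\infty|A'|\le\|\rho^{-1/2}\|_{L^2}\|A'\|\le C\|A'\|$, and by Agmon's inequality on $\R_+$ (valid since $A''\in H^1$ with $A''(\infty)=0$) $|A''(0)|^2\le2\|A''\|_{L^2}\|A'''\|_{L^2}$. From \eqref{bound_A}--\eqref{bound_A_bis} one has $\|A'\|\le C\|F\|$, $\|A''\|_{L^2}\le C\sqrt k\,\|F\|$ and $\|A'''\|_{L^2}\le Ck^{3/4}\|F\|$, whence $\tfrac1k|A''(0)|\le Ck^{-3/8}\|F\|$ and, $|\mu|$ being bounded, $|\psi(0)|\le C\|F\|\le Ck^{-1/4}$. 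Combined with $|\phi_{bl}(0)|\le Ck^{-1/2}$ this gives the announced inequality (indeed $\OO(k^{-1/2})$). For holomorphy: $\Phi_\infty$ is holomorphic on $\{\Im\mu>0\}\supset D(\mu_\infty,\tfrac12\Im\mu_\infty)$ by differentiation under the integral in \eqref{def_Phi_infty}; $\phi_{bl}(0)$ is holomorphic from its explicit expression, since $\mu\mapsto(-ik\mu)^{-1/2}$ and $\mu\mapsto\phi_{\mu,\infty}'(0)$ are; and $\mu\mapsto\psi_{\mu,k}(0)$ is holomorphic by the analyticity statement of Proposition~\ref{prop_resolvent}, the only subtlety being that the forcing $F=F_\mu$ itself varies with $\mu$. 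That is dealt with by observing that $\mu\mapsto F_\mu$ is holomorphic into $L^2(\rho)$ (all ingredients are, with locally uniform bounds), expanding $F_\mu$ about any base point as a norm-convergent series $\sum_n(\mu-\mu_0)^nG_n$ with $G_n\in L^2(\rho)$, and using the linearity of the resolvent and the uniform bound $|\psi_{\mu,k}^{(G)}(0)|\le C\|G\|$, so that $\mu\mapsto\psi_{\mu,k}(0)=\sum_n(\mu-\mu_0)^n\psi_{\mu,k}^{(G_n)}(0)$ is a locally uniform limit of holomorphic functions.

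I expect the main obstacle to be the bound on $\|F\|$: one has to notice and use the cancellation of the non-$L^2(\rho)$ constant tails of $-V_s'\phi_{bl}$ and $\tfrac1k(\mu-U_s)$, and then keep track of the several powers of $k$ and $\Im\mu$ through Cauchy--Schwarz, Agmon and \eqref{bound_A}--\eqref{bound_A_bis}. Everything else is bookkeeping, and the crude exponent $k^{-1/4}$ leaves a comfortable margin.
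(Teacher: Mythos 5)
Your proposal is correct and follows essentially the same route as the paper: the same splitting $\Phi_k(\mu)-\Phi_\infty(\mu)=\phi_{bl}(0)+\psi(0)$, the bound on the forcing $F$ of \eqref{def:F} exploiting the cancellation of the non-decaying tails of $-V_s'\phi_{bl}$ and $\frac1k(\mu-U_s)$, and the trace estimates drawn from \eqref{bound_A}--\eqref{bound_A_bis} of Proposition~\ref{prop_resolvent}. The only deviations are bookkeeping-level: you keep the factor $1/k$ in front of $A''(0)$ and can therefore afford the cruder bound $\|F\|\lesssim k^{-1/4}$ (the paper drops that factor and uses the sharper $\|F\|\lesssim k^{-3/4}$ coming from the vanishing of $V_s$ at $z=0$), and you justify the analyticity of $\mu\mapsto\psi_{\mu,k}(0)$ for the $\mu$-dependent source by a power-series argument where the paper simply remarks that the proof of Proposition~\ref{prop_resolvent} extends verbatim.
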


\mspace Before proving this proposition, let us show how it implies
the existence of a zero $\mu_k$ of $\Phi_k$. We already know that
$\Phi_\infty$ is holomorphic, and that $\mu_\infty$ is one of its
zeroes, therefore isolated. Hence, for $\delta > 0$ small enough,
$\eps := \inf_{|\mu - \mu_\infty|=\delta} |\Phi_\infty(\mu)| > 0$. For
all $k$ large enough so that $|C k^{-1/4}| \le \frac{\eps}{2}$, we
conclude by Proposition~\ref{prop_Rouche} and Rouché's theorem that
$\Phi_k$ has a zero in $D(\mu_\infty, \delta)$.

\begin{proof}[Proof of Proposition~\ref{prop_Rouche}]
  We have
  $$ \Phi_{k}(\mu) - \Phi_\infty(\mu) = \phi_{\mu,k}(0) - \phi_{\mu,\infty}(0) = \phi_{bl}(0) + \psi(0). $$
  First, from \eqref{def_phi_bl},
  \begin{equation} \label{phi_bl_zero}
    \phi_{bl}(0) =  \frac{1}{k}  \big(\mu -  A_s\big) -  \frac{1}{\sqrt{-ik\mu}} \, \Big( - \phi_{\mu,\infty}(0) + \frac{1}{k} \Big)
  \end{equation}
  so that for all $\mu \in D(\mu_\infty, \delta)$ it holds that
  $|\phi_{bl}(0)| \le C k^{-1/2}$. Then, with the notations of
  Proposition~\ref{prop_resolvent},
  \begin{align*}
    |\psi(0)| &= \Big|\Big( \mu - V_s + \frac{1}{ik} \frac{\dd^2}{\dd z^2}
                \Big) A(0)\Big|
                = \Big| \mu A(0)  + \frac{1}{ik}  A''(0)\Big| \\
              & \le C\left( \|A'\| + \|A''\|^{1/2}_{L^2} \|A^{(3)}\|^{1/2}_{L^2}  \right) \\
              & \le C k^{1/2} \|F\|
  \end{align*}
  where the last inequality is a consequence of the estimates in
  Proposition~\ref{prop_resolvent}, with $F$ defined in
  \eqref{def:F}. Note that
  \begin{align*}
    |F(z)| & \le \Big\|\frac{V_s}{z}\Big\|_{L^\infty}  |z \phi'_{bl}(z)| +  \| V'_s \|_{L^\infty}  \big|\phi_{bl}(z) - \frac{1}{k}(\mu - A_s)\big| +  |V'_s(z) - 1| \frac{1}{k} (\mu - A_s) \\
           &\quad + \frac{1}{k}  |U_s(z) - A_s | + \frac{1}{k}  |z U'_s(z)|.
  \end{align*}
  Using this inequality, one gets $\|F\| \le C k^{-3/4}$, and eventually
  $|\psi(0)| \le C k^{-1/4}$. The estimate of
  Proposition~\ref{prop_Rouche} follows. As regards the analyticity of
  $\mu \mapsto \Phi_k$, it follows from the analyticity of
  $\mu \mapsto \phi_{bl}(0)$ and of $\mu \rightarrow \psi(0)$. The
  former is deduced directly from formula~\eqref{phi_bl_zero}, having in
  mind the analyticity of $\phi_{\mu,\infty}(0) = \Phi_\infty(0)$. The
  latter is deduced from the analyticity statement of
  Proposition~\ref{prop_resolvent}. More precisely, the statement is
  given there for a source term $F$ that is independent of $\mu$, but it
  is still true, with the same proof, for an $F$ analytic in $\mu$,
  which is the case here, {\it cf.} \eqref{def:F}. This concludes the
  proof of the proposition.
\end{proof}

\section{Examples of instabilities} \label{examples}

In this final part of the paper, we exhibit examples, both numerical
and analytical, for which the quantity $n_+ - n_-$ mentioned in
Theorem~\ref{main_thm} is indeed non-zero.

For the theoretical existence of unstable modes, we note that by
\eqref{def_Phi_infty} can be written as
\begin{equation*}
  \Phi_\infty(\mu) = \int_0^\infty q_\mu(V_s(y))\, \dd y
\end{equation*}
with the function \(q_\mu : u \mapsto \mu (\mu-u)^2\). For a fixed
\(\mu\), we now look at \(q_\mu([0,\infty))\) and by the idea of
Section~6 of \cite{MR3835243}, we can construct smooth profiles
\(V_s\) with \(\Phi_\infty(\mu) = 0\) if and only if the origin is in
the interior of the convex hull of \(q_\mu([0,\infty))\). Indeed for
\(\Re \mu > 0\) and \(\Im \mu > 0\), the origin is in the convex hull
of \(q_\mu([0,\infty))\) so that unstable modes exist, see
Figure~\ref{fig:image-q-mu} as an example.

\begin{figure}[htb]
  \centering
  \begin{tikzpicture}
    \begin{axis}[
      xlabel={\(\Re q_\mu\)},
      ylabel={\(\Im q_\mu\)},
      ]
      \addplot [very thick]
      table {%
        0.5 -0.5
        0.499965914167813 -0.506745190379735
        0.499862432248654 -0.513557927222098
        0.499687694802296 -0.520438163323987
        0.499439812020407 -0.527385818535894
        0.499116863803816 -0.534400778464592
        0.498716899873421 -0.541482893147281
        0.498237939916449 -0.548631975697368
        0.497677973769829 -0.555847800922129
        0.49703496164246 -0.563130103912561
        0.496306834378236 -0.570478578605797
        0.495491493761698 -0.577892876320535
        0.494586812868248 -0.585372604266003
        0.493590636460895 -0.59291732402506
        0.492500781435541 -0.600526550012123
        0.491315037316841 -0.608199747906691
        0.490031166806738 -0.615936333063329
        0.488646906387764 -0.623735668899083
        0.487159966983266 -0.63159706525938
        0.485568034676723 -0.639519776763586
        0.483868771492347 -0.647503001131508
        0.482059816239192 -0.655545877492224
        0.48013878542099 -0.663647484676749
        0.47810327421398 -0.671806839496203
        0.475950857514969 -0.680022895007206
        0.473679091061893 -0.688294538766438
        0.471285512629125 -0.696620591076367
        0.468767643299797 -0.704999803224354
        0.466122988817355 -0.713430855717433
        0.46334904101858 -0.721912356515267
        0.460443279350244 -0.730442839263887
        0.457403172471594 -0.739020761533028
        0.454226179944741 -0.747644503059991
        0.450909754015051 -0.756312364003171
        0.447451341483522 -0.765022563208523
        0.443848385673117 -0.773773236492436
        0.440098328490886 -0.782562434944647
        0.436198612587679 -0.79138812325501
        0.432146683617118 -0.800248178068116
        0.427939992595405 -0.809140386369926
        0.423575998363419 -0.818062443910796
        0.419052170152421 -0.827011953669402
        0.414365990254543 -0.835986424362317
        0.409514956799096 -0.844983269004118
        0.404496586635524 -0.853999803523126
        0.399308418323699 -0.863033245438026
        0.393948015232011 -0.872080712600817
        0.388412968743522 -0.88113922201169
        0.382700901570219 -0.890205688711629
        0.376809471175154 -0.89927692475866
        0.370736373302023 -0.908349638293848
        0.364479345611439 -0.917420432703305
        0.358036171422893 -0.926485805882573
        0.351404683561074 -0.935542149609919
        0.34458276830493 -0.944585749035173
        0.337568369437474 -0.95361278229087
        0.330359492394054 -0.962619320232526
        0.322954208506376 -0.971601326315002
        0.315350659339254 -0.980554656611933
        0.307547061116611 -0.989475059985285
        0.299541709232905 -0.998358178412107
        0.291332982845686 -1.0071995474756
        0.282919349544582 -1.01599459702755
        0.274299370091575 -1.02473865202926
        0.265471703226937 -1.03342693357784
        0.256435110534783 -1.04205456012501
        0.247188461361659 -1.05061654889503
        0.237730737781151 -1.05910781750855
        0.228061039596992 -1.06752318581891
        0.218178589376631 -1.07585737796729
        0.208082737506763 -1.08410502466264
        0.197772967261781 -1.09226066569257
        0.187248899875635 -1.10031875267046
        0.176510299607073 -1.10827365202429
        0.165557078787743 -1.11611964823202
        0.154389302842133 -1.12385094730794
        0.143007195267877 -1.13146168054425
        0.131411142564443 -1.13894590851133
        0.119601699097784 -1.14629762531988
        0.107579591888101 -1.15351076314746
        0.0953457253074121 -1.16057919703148
        0.0829011856732524 -1.16749674992986
        0.0702472457244347 -1.17425719805024
        0.0573853689644637 -1.18085427644757
        0.0443172138578676 -1.18728168488947
        0.0310446378644383 -1.19353309398782
        0.0175697012961168 -1.19960215159427
        0.00389467098105867 -1.20548248945655
        -0.00997797628075014 -1.21116773013157
        -0.0240455504860727 -1.21665149415037
        -0.0383051454370107 -1.22192740742914
        -0.0527536360054285 -1.22698910891956
        -0.0673876756689993 -1.23183025849059
        -0.0822036943345612 -1.23644454503305
        -0.0971978964643095 -1.24082569477733
        -0.112366259520129 -1.24496747981322
        -0.127704532741082 -1.24886372680036
        -0.14320823626871 -1.25250832585628
        -0.158872660634389 -1.25589523960824
        -0.174692866622459 -1.25901851239402
        -0.190663685522324 -1.26187227959581
        -0.206779719782025 -1.26445077709012
        -0.223035344075109 -1.26674835079612
        -0.239424706791831 -1.26875946630333
        -0.255941731964833 -1.2704787185591
        -0.272580121638545 -1.2719008415951
        -0.289333358690531 -1.27302071827145
        -0.306194710111924 -1.27383339001619
        -0.323157230752958 -1.27433406653702
        -0.3402137675384 -1.27451813548166
        -0.357356964156398 -1.27438117202252
        -0.37457926622294 -1.27391894834073
        -0.391872926922739 -1.27312744298436
        -0.409230013125904 -1.27200285007485
        -0.426642411978293 -1.27054158833589
        -0.444101837961897 -1.26874030991827
        -0.46159984042007 -1.26659590899446
        -0.479127811540819 -1.26410553009648
        -0.496676994789746 -1.26126657617079
        -0.514238493782629 -1.25807671632396
        -0.531803281585987 -1.25453389323343
        -0.549362210432327 -1.25063633019785
        -0.566906021835165 -1.24638253780206
        -0.584425357087283 -1.24177132017246
        -0.601910768124101 -1.23680178079925
        -0.619352728732497 -1.23147332790265
        -0.63674164608386 -1.22578567932165
        -0.654067872568744 -1.2197388669043
        -0.67132171790902 -1.21333324038027
        -0.688493461522135 -1.2065694706973
        -0.705573365110766 -1.19944855280482
        -0.722551685450006 -1.19197180786917
        -0.739418687343083 -1.1841408849069
        -0.756164656715643 -1.17595776182366
        -0.772779913817689 -1.16742474584843
        -0.789254826501491 -1.15854447335443
        -0.80557982354313 -1.14931990905983
        -0.82174540797471 -1.13975434460342
        -0.837742170393933 -1.12985139649236
        -0.853560802217339 -1.11961500342101
        -0.869192108843404 -1.10904942296194
        -0.884627022691626 -1.09815922763233
        -0.899856616083828 -1.08694930034087
        -0.914872113934147 -1.07542482922239
        -0.929664906214571 -1.06359130186958
        -0.944226560163365 -1.05145449897294
        -0.958548832204403 -1.03902048738242
        -0.972623679546199 -1.02629561260574
        -0.98644327143033 -1.01328649076064
        -1 -1
        -1 -1
        -1.08396802079176 -0.903801279087144
        -1.15376634320221 -0.797805787542312
        -1.20806127651684 -0.684790428137221
        -1.24618171506462 -0.567698952788257
        -1.26811958698952 -0.449461842586164
        -1.27447772366242 -0.332831532316949
        -1.26637593937751 -0.220248363719364
        -1.24533009628614 -0.113746717053327
        -1.21311999080154 -0.0149042898468527
        -1.1716603788652 0.0751682413558739
        -1.12288617876032 0.155804067634701
        -1.06865883358767 0.226732695097273
        -1.01069685196347 0.288017505732133
        -0.950530273178122 0.339987023786573
        -0.889476507143117 0.383166763136213
        -0.828633700843642 0.4182165206865
        -0.768887329573193 0.445876052805709
        -0.710925871519104 0.466920453100678
        -0.655261960054074 0.482125335652529
        -0.602256117962338 0.492241118014422
        -0.552140915132525 0.497975236623405
        -0.505044064142878 0.49998093379188
        -0.461009530078872 0.498851248436581
        -0.420016167749182 0.49511695123151
        -0.381993717390794 0.489247333785775
        -0.346836205781482 0.481652951709599
        -0.314412934062453 0.472689607933149
        -0.28457730685154 0.462663031084713
        -0.25717378722189 0.451833847509274
        -0.232043263291553 0.440422563040624
        -0.209027095565081 0.428614363343678
        -0.187970087806248 0.416563612470218
        -0.168722593632119 0.40439798175448
        -0.151141939812241 0.392222178849759
        -0.135093317627192 0.380121272794159
        -0.120450266786249 0.368163628258197
        -0.107094852851408 0.356403472845241
        -0.0949176189617239 0.344883127301242
        -0.0838173757383704 0.333634931149535
        -0.073700879278539 0.322682896665196
        -0.0644824357565486 0.312044123031501
        -0.0560834619729187 0.301730000551361
        -0.048432023873826 0.291747232325268
        -0.0414623692840851 0.282098698134828
        -0.035114466575658 0.272784182574661
        -0.0293335574901682 0.263800986875299
        -0.0240697296457873 0.255144441427517
        -0.0192775122187192 0.246808333792415
        -0.0149154967602551 0.238785264976973
        -0.0109459839809284 0.231066944972134
        -0.00733465651440393 0.223644436980569
        -0.00405027709418956 0.216508358389548
        -0.00106441117966807 0.209649045353458
        0.00164882719010048 0.203056686821774
        0.00411300769059302 0.196721432963645
        0.00634959560730852 0.190633482182036
        0.00837814480736519 0.184783150262555
        0.0102164732438068 0.17916092464989
        0.0118808224033841 0.173757506375171
        0.0133860020444145 0.168563841758839
        0.0147455214957706 0.163571145675588
        0.0159717087062408 0.158770917881819
        0.0170758181492049 0.154154953664074
        0.0180681286034553 0.149715349862531
        0.0189580317489598 0.145444507151152
        0.0197541124376748 0.141335129310631
        0.0204642214249792 0.137380220107733
        0.0210955412773955 0.133573078291479
        0.0216546461072003 0.129907291129821
        0.0221475557243548 0.126376726837536
        0.0225797847408151 0.122975526184784
        0.0229563871115399 0.119698093524412
        0.0232819965501762 0.116539087432966
        0.0235608632152023 0.113493411124326
        0.0237968870239668 0.110556202764717
        0.0239936479172961 0.107722825792627
        0.0241544333658666 0.104988859326204
        0.0242822633810757 0.102350088723197
        0.0243799132674457 0.0998024963439973
        0.0244499343303941 0.097342252556329
        0.0244946727322897 0.0949657070101903
        0.024516286670858 0.0926693802034898
        0.0245167620370159 0.0904499553521108
        0.0244979266939146 0.08830427057268
        0.0244614635051958 0.0862293113818801
        0.0244089222280602 0.0842222035125759
        0.0243417303755774 0.0822802060441764
        0.024261203142612 0.0804007048424039
        0.0241685524806823 0.0785812063018879
        0.0240648953989102 0.0768193313836692
        0.0239512615608771 0.0751128099386948
        0.0238286002405738 0.0734594753076779
        0.0236977866946705 0.0718572591872055
        0.0235596280029479 0.0703041867516866
        0.023414868423882 0.0687983720205877
        0.0232641943079954 0.0673380134603852
        0.0231082386076355 0.0659213898107442
        0.0229475850182759 0.0645468561245898
        0.0227827717832105 0.0632128400119565
        0.0226142951906012 0.0619178380777667
        0.0224426127892043 0.0606604125439872
        0.0222681463467208 0.0594391880469409
        0.0220912845725618 0.0582528486008876
        0.0219123856248649 0.0571001347193464
        0.0217317794198367 0.0559798406859835
        0.0215497697598896 0.0548908119672516
        0.021366636295591 0.0538319427593204
        0.0211826363351276 0.05280217366219
        0.0209980065137897 0.0518004894742198
        0.0208129643348961 0.050825917100641
        0.0206277095925948 0.0498775235699446
        0.0204424256860779 0.0489544141523473
        0.0202572808339336 0.0480557305748415
        0.0200724291966184 0.0471806493276221
        0.0198880119143569 0.0463283800569598
        0.0197041580671608 0.045498164039856
        0.0195209855631007 0.0446892727360646
        0.0193386019604528 0.0439010064133069
        0.0191571052288773 0.0431326928417348
        0.0189765844543585 0.0423836860539115
        0.0187971204922535 0.0416533651667889
        0.0186187865724357 0.0409411332623507
        0.018441648860201 0.0402464163237788
        0.0182657669763056 0.0395686622241743
        0.0180911944792347 0.0389073397650269
        0.0179179793125528 0.0382619377617879
        0.0177461642199608 0.037631964174044
        0.0175757871304757 0.037016945277933
        0.0174068815159593 0.0364164248785706
        0.0172394767230485 0.0358299635603839
        0.0170735982813769 0.0352571379733641
        0.0169092681898343 0.0346975401533598
        0.0167465051824718 0.034150776874638
        0.016585324975541 0.0336164690330392
        0.0164257404970364 0.0330942510581418
        0.0162677621000106 0.0325837703529434
        0.0161113977608332 0.0320846867596446
        0.0159566532634751 0.0315966720502021
        0.0158035323708208 0.0311194094403874
        0.0156520369839341 0.0306525931261605
        0.0155021672901352 0.0301959278412299
        0.0153539219006815 0.0297491284347328
        0.0152072979787888 0.029311919468027
        0.015062291358672 0.0288840348296411
        0.0149188966562371 0.02846521736748
        0.014777107372009 0.0280552185374315
        0.0146369159868373 0.0276537980675672
        0.0144983140508832 0.0272607236371702
        0.0143612922663541 0.0268757705698672
        0.0142258405644191 0.0264987215401783
        0.0140919481767071 0.0261293662928344
        0.0139596037017618 0.0257675013742463
        0.0138287951667994 0.0254129298755439
        0.0136995100850921 0.0250654611866297
        0.0135717355092768 0.0247249107607256
        0.0134454580808671 0.0243910998889136
        0.0133206640762283 0.0240638554842004
        0.0131973394492552 0.0237430098746585
        0.0130754698709787 0.0234284006052194
        0.0129550407663074 0.023119870247717
        0.0128360373481003 0.0228172662187981
        0.0127184446487506 0.0225204406053389
        0.0126022475494486 0.0222292499970207
        0.0124874308072812 0.0219435553257405
        0.0123739790803144 0.0216632217115428
        0.0122618769507931 0.0213881183147801
        0.0121511089465873 0.0211181181942188
        0.0120416595610024 0.0208530981708259
        0.0119335132710625 0.0205929386969813
        0.0118266545543717 0.0203375237308751
        0.0117210679046475 0.0200867406158608
        0.0116167378460179 0.019840479964545
        0.011513648946163 0.0195986355474066
        0.0114117858283818 0.0193611041857478
        0.011311133182655 0.0191277856487881
        0.0112116757757729 0.0188985825547224
        0.0111133984605913 0.0186734002755725
        0.0110162861844746 0.0184521468456687
        0.0109203239969817 0.0182347328736077
        0.0108254970568458 0.0180210714575377
        0.0107317906382977 0.0178110781036309
        0.0106391901367755 0.0176046706476089
        0.0105476810740649 0.0174017691791929
        0.0104572491029084 0.0172022959693561
        0.0103678800111203 0.0170061754002633
        0.0102795597252421 0.0168133338977846
        0.0101922743137702 0.0166236998664787
        0.010106009989986 0.0164372036269439
        0.0100207531144166 0.0162537773554397
        0.00993649019695184 0.0160733550256878
        0.00985320789864334 0.0158958723527632
        0.00977089303320684 0.0157212667389914
        0.00968953256825065 0.0155494772217724
        0.00960911362624955 0.0153804444232522
        0.00952962348528306 0.0152141105017706
        0.00945104957955554 0.0150504191050139
        0.00937337949971455 0.0148893153248051
        0.00929660099298269 0.0147307456534676
        0.00922070196311719 0.0145746579417014
        0.00922070196311719 0.0145746579417014
        0.00773530454125697 0.0116484416098829
        0.00646694729750431 0.00933308577807472
        0.00539224444950198 0.00749568423179227
        0.00448686831069901 0.0060334133266749
        0.00372747181896563 0.0048665006030749
        0.0030926541083692 0.00393286606281656
        0.00256335783486274 0.0031840414682132
        0.00212293940265453 0.00258206074433204
        0.00175706025804709 0.00209708635168586
        0.00145348905090043 0.00170559308609155
        0.00120186799289238 0.00138897439873933
        0.000993474095765711 0.00113246953266793
        0.000820992039096985 0.000924334837661147
        0.000678306959582823 0.00075520146596792
        0.000560320424873556 0.000617575784508377
        0.00046278995985631 0.000505449438166397
        0.00038219091421682 0.000413993954168388
        0.000315598681673962 0.00033932075901701
        0.000260588979833899 0.000278291987240875
        0.00021515387312483 0.000228370867807863
        0.000177631344789695 0.000187503056362418
        0.000146646421210429 0.000154022245373259
        0.000121062077396628 0.000126574883030396
        9.99383800057294e-05 0.000104059979514401
      };
    \end{axis}
  \end{tikzpicture}
  \caption{Image of $q_\mu([0,\infty))$ for $\mu = 1 + i$.}
  \label{fig:image-q-mu}
\end{figure}
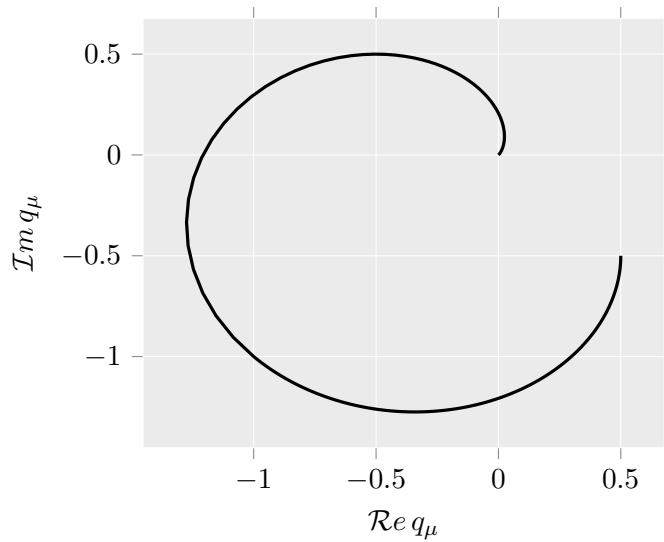

The advantage of the stability criterion is that it is explicitly
computable and has an immediate visual interpretation as the image of
\(\Phi_\infty(\R + i \epsilon)\) for \(\epsilon > 0\).

As an example, we consider
\begin{equation}
  \label{eq:example1}
  V_s(y) = x + 4x \, e^{-2x}.
\end{equation}
In this case we find the resulting curve is shown in
Figure~\ref{fig:example1}. Here we see that it is stable as it is not
crossing the positive real axis.

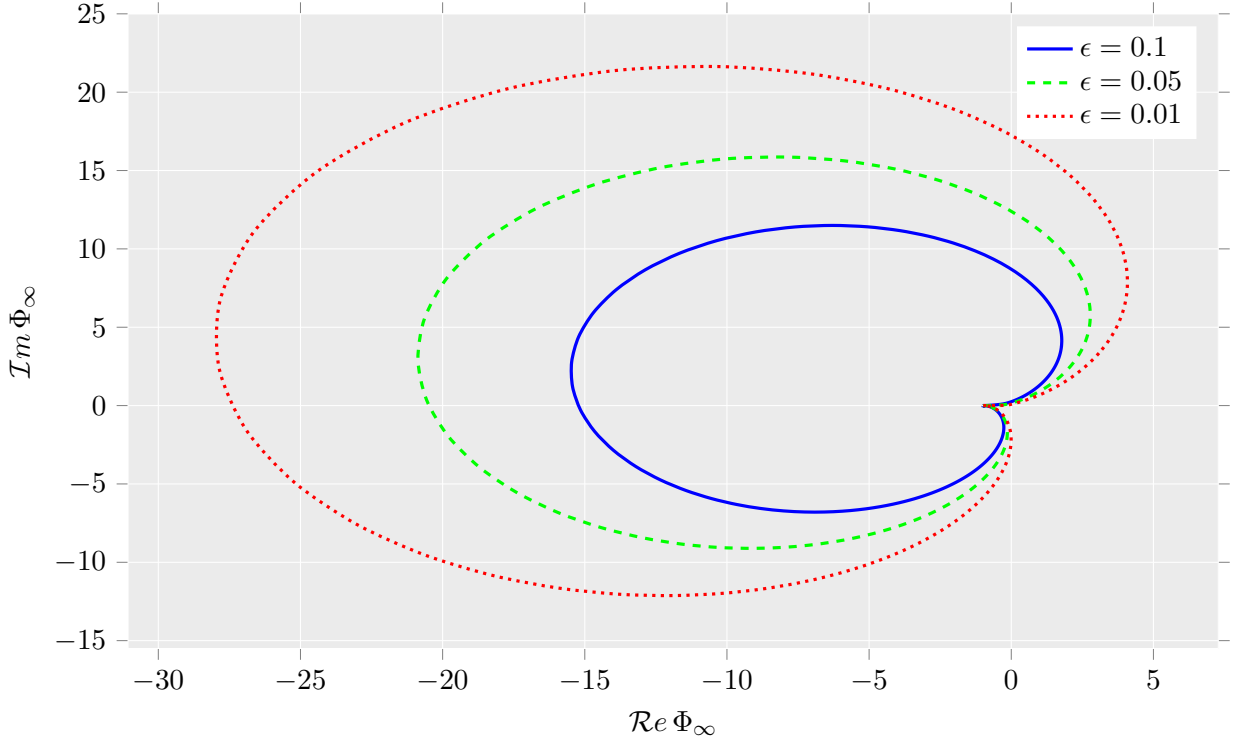
\begin{figure}[htb]
  \centering
  \begin{tikzpicture}
    \begin{axis}[
      xlabel={\(\Re \Phi_\infty\)},
      ylabel={\(\Im \Phi_\infty\)},
      smooth,
      width=\textwidth,
      height=10cm,
      xtick distance=5
      ]
      \addplot [very thick,blue]
      table {ex4-offset1.txt};
      \addlegendentry{\(\epsilon=0.1\)}
      \addplot [very thick, dashed,green]
      table {ex4-offset2.txt};
      \addlegendentry{\(\epsilon=0.05\)}
      \addplot [very thick, dotted,red]
      table {ex4-offset3.txt};
      \addlegendentry{\(\epsilon=0.01\)}
    \end{axis}
  \end{tikzpicture}
  \caption{Image of $\Phi_\infty(\R + i \epsilon)$ for $V_s$ from
    \eqref{eq:example1} with $\epsilon=0.1$, \(\epsilon=0.5\) and \(\epsilon=0.01\).}
  \label{fig:example1}
\end{figure}

As another example, we consider
\begin{equation}
  \label{eq:example2}
  V_s(y) =  \sin(2x)\, e^{-x} + x\, (1-e^{-x})
\end{equation}
In this case we find the resulting curve is shown in
Figure~\ref{fig:example2}. Here we see that unstable modes exists as
the positive real axis is crossed once.

\begin{figure}[htb]
  \centering
  \begin{tikzpicture}
    \begin{axis}[
      xlabel={\(\Re \Phi_\infty\)},
      ylabel={\(\Im \Phi_\infty\)},
      smooth
      ]
      \addplot [very thick,blue]
      table {ex6-offset1.txt};
      \addlegendentry{\(\epsilon=0.1\)}
      \addplot [very thick, dashed,green]
      table {ex6-offset2.txt};
      \addlegendentry{\(\epsilon=0.05\)}
    \end{axis}
  \end{tikzpicture}
  \caption{Image of $\Phi_\infty(\R + i \epsilon)$ for $V_s$ from
    \eqref{eq:example2} with
    $\epsilon=0.1$ and \(\epsilon=0.5\).}
  \label{fig:example2}
\end{figure}
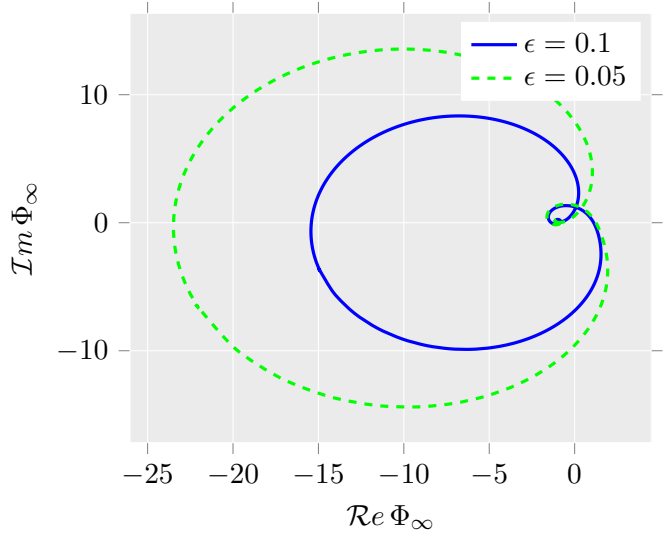

\appendix
\section{Stability for $U_s = 0$} \label{appendixA}

The assumptions of Theorem~\ref{main_thm} are not satisfied in the
case where $U_s$ vanishes identically. On the contrary, one can show
in this setting that any family of solutions of \eqref{LTD} of type
\begin{equation*}
  u_k(t,x,z) =  e^{\lambda_k t} e^{i kx} \hat{u}_k(z), \quad A_k(t,x)
  =  e^{\lambda_k t} e^{i kx}, \quad k \in \R,
\end{equation*}
has  growth rate $(\Re \lambda_k)^+  =O(1)$ as $|k| \rightarrow
+\infty$. We now sketch  the proof of this claim. It builds upon
classical works on the linear stability of Couette or Blasius flows
within Navier-Stokes, {\it cf.} \cite{DrazinReid,Smith79, Was}.

The trick is to differentiate the linearised equation once by \(z\)
and express the result in terms of \(\omega(z) :=
\hat{u}_k'(z)\). Then the linearised evolution
\eqref{eq:simple-linear-system} yields the eigenmode equation
\begin{equation*}
  (\lambda_k  + i k z) \omega -  \omega''  = 0
\end{equation*}
with the boundary conditions
\begin{equation*}
  \omega'(0) = i k |k|,
  \lim_{z\to\infty} \omega = 0
\end{equation*}
and the consistency equation
\begin{equation}
  \label{eq:consistency-simple}
  1 = \int_0^\infty \omega(z)\, \dd z
\end{equation}
by using that \(\omega = \hat{u}_k'(z)\) and
\(\lim_{z \to \infty} \hat{u}_k(z) = 1\).
The idea is then to make a change of variable in the complex plane to get back to the Airy equation
$$ \xi \varphi(\xi) - \pa^2_\xi \varphi(\xi) = 0. $$
We set
\begin{equation*}
  \eta_k := (ik)^{-2/3} \lambda_k, \quad
  \xi := (ik)^{1/3} y, \quad
  W(\xi) := \omega(y)
\end{equation*}
where the roots are chosen with positive real part so that
\begin{equation*}
  (\xi + \eta_k) W - W'' = 0.
\end{equation*}
Using the boundary condition on $\omega'(0)$ and the decay condition
of $\omega$ at infinity, we get
\begin{equation*}
  W(\xi)  = ik |k| (ik)^{-1/3}
  \frac{\Ai(\xi+\eta_k)}{\Ai(\eta_k, -1)}
\end{equation*}
where $\Ai$ is the so-called Airy function of the first kind, and
$\Ai(\cdot, -1)$ denotes its derivative, {\it cf.}
\cite{DrazinReid}. Then the consistency
equation~\eqref{eq:consistency-simple} yields the dispersion relation
\begin{equation} \label{dispersion}
1 = i k |k| \frac{\Ai(\eta_k, 1)}{\Ai(\eta_k,-1)},
\end{equation}
where $\Ai(\cdot,1)$ is the antiderivative of $\Ai$ that vanishes at
$+\infty$, and we recall $\eta_k = (ik)^{-2/3}
\lambda_k$.

We will now use these relations to determine the asymptotic behaviour
of unstable eigenvalues $\lambda_k$ (meaning $\Re \lambda_k > 0$) when
$|k| \rightarrow +\infty$.  We distinguish between three regimes:
$\eta_k$ goes to zero, goes to infinity, or is $O(1)$ as
$|k| \rightarrow +\infty$.

\begin{itemize}
\item If $\eta_k \rightarrow 0$
\end{itemize}
We find
\begin{equation*}
  1 \sim ik |k| \frac{\Ai(0, 1)}{\Ai(0,-1)}
\end{equation*}
where $\frac{\Ai(0, 1)}{\Ai(0,-1)} = 3^{-2/3} \Gamma(1/3)$, see
\cite[equation (A11)]{DrazinReid}. This yields a contradiction.
\begin{itemize}
\item If  $|\eta_k| \rightarrow +\infty$
\end{itemize}
We use the asymptotic expansion given in \cite{DrazinReid}, see
equations (A12)-(A13)-(A14). In the case $k > 0$, we find
\begin{equation*}
  1 \: \sim \: \frac{i k |k|}{\eta_k}
  \frac{(1- \frac{3 a(1)}{2} (\eta_k)^{-3/2})}{(1 - \frac{3 a(-1)}{2}
   (\eta_k)^{-3/2})}
\end{equation*}
where $a(p) = \frac{1}{72} (12 p^2 + 24 p + 5)$. Using the
$\eta_k = (ik)^{-2/3} \lambda_k$ we find \(\Re \lambda_k \le 0\).

\begin{itemize}
\item If $\eta_k \sim O(1)$
\end{itemize}
Then
\begin{equation*}
  \Ai(\eta_k,1) = \Ai(\eta_k,-1) i k |k|.
\end{equation*}
Hence we see that a subsequence of $\eta_k$ should converge to some
$\eta^0$ satisfying $ \Ai(\eta^0,1) = 0$. As $\Re \lambda_k > 0$, one
finds that $-5\pi/6 < {\rm arg}(\eta_k) < \pi/6$ and thus
$-5\pi/6 \le {\rm arg}(\eta^0) \le \pi/6$.  Similarly, in the case
$k < 0$, one should have $-\pi/6 \le {\rm arg}(\eta^0) \le 5 \pi/6$.

\medskip
These two scenarios are excluded by the following proposition, which can be found in \cite{Was}: {\em the function $\Ai(\cdot,1)$ has no zero in the closed sector $-5\pi/6 \le {\rm arg}(\eta) \le 5\pi/6$}. This concludes the proof of spectral stability.

\section*{Acknowledgements}

The authors acknowledge the support of SingFlows project, grant
ANR-18- CE40-0027 of the French National Research Agency (ANR). D. G-V
acknowledges the support of the Institut Universitaire de France.

{\small
\bibliographystyle{siam}
\bibliography{TD_analysis}
}
\end{document}